\definecolor{black}{rgb}{0.0, 0.0, 0.0}
\definecolor{red}{rgb}{1.0, 0.5, 0.5}
\newcommand{\margnote}[1]{
\ifthenelse{\boolean{shownotes}}%
{\marginpar{\raggedright\tiny\texttt{#1}}}%
{}%
}
\newcommand{\hole}[1]{
\ifthenelse{\boolean{shownotes}}%
{\begin{center} \fbox{ \rule {.25cm}{0cm} \rule[-.1cm]{0cm}{.4cm}
\parbox{.85\textwidth}{\begin{center} \texttt{#1}\end{center}} \rule
{.25cm}{0cm}}\end{center}} {} }
\title[Emergent dynamics of the Cucker-Smale flocking model and its variants]{Emergent dynamics of the Cucker-Smale flocking model and its variants}
\author[Choi]{Young-Pil Choi}
\address[Young-Pil Choi]{\newline Fakult\"at f\"ur Mathematik
    \newline  Technische Universit\"at M\"unchen, Boltzmannstra{\ss}e 3, 85748, Garching bei M\"unchen, Germany}
\email{ychoi@ma.tum.de}
\author[Ha]{Seung-Yeal Ha}
\address[Seung-Yeal Ha]{\newline Department of Mathematical Sciences and Research Institute of Mathematics
    \newline Seoul National University, Seoul 151-747, Republic of Korea}
\email{syha@snu.ac.kr}
\author[Li]{Zhuchun Li}
\address[Zhuchun Li]{\newline Department of Mathematics
    \newline Harbin Institute of Technology, Harbin 150001, P.R.China}
\email{lizhuchun@hit.edu.cn}
\numberwithin{equation}{section}
\newtheorem{theorem}{Theorem}[section]
\newtheorem{lemma}{Lemma}[section]
\newtheorem{proposition}{Proposition}[section]
\newtheorem{remark}{Remark}[section]
\newtheorem{definition}{Definition}[section]
\newcommand{\R}{\mathbb R}
\newcommand{\T}{\mathbb T}
\newcommand{\bq}{\begin{equation}}
\newcommand{\eq}{\end{equation}}
\newcommand{\lt}{\left}
\newcommand{\rt}{\right}
\newcommand{\pa}{\partial}
\newcommand{\ml}{\mathcal{L}}
\newcommand{\om}{\Omega}
\begin{document}

\allowdisplaybreaks

\begin{abstract}In this chapter, we present the Cucker-Smale type flocking models, and discuss their mathematical structures and flocking theorems in terms of coupling strength, interaction topologies and initial data. In 2007, two mathematicians Felipe Cucker and Steve Smale introduced a second-order particle model which resembles Newton's equations in $N$-body system, and present how their simple model can exhibit emergent flocking behavior under sufficient conditions expressed only in terms of parameters and initial data. After Cucker-Smale's seminal works in \cite{CS1, CS2}, their model has received lots of attention from applied math and control engineering  communities. We discuss the state-of-art for the flocking theorems to Cucker-Smale type flocking models.
\end{abstract}

\maketitle 

\tableofcontents

\section{Introduction} \label{Sec1}
The jargon ``{\it flocking}" represents collective phenomena in which self-propelled particles (or agents) is organized into an ordered motion from a disordered state using only limited environmental information and simples rules \cite{TT}. Such an organized motion is ubiquitous in our nature, e.g., aggregation of bacteria, flocking of birds, swarming of fish, herding of sheep etc. \cite{BC, VCBCS}, and they have been extensively studied recently because of their possible applications to sensor networks, controls of robots and unmanned aerial vehicles \cite{LPLS, PLSGP, PGE}, and opinion formation of social networks. After the pioneering work \cite{R, VCBCS} of Reynolds and Vicsek et al, many agent-based models have been proposed in literature and studied extensively both analytically and numerically. Among them, we are interested in the model introduced by Cucker and Smale \cite{CS1, CS2}.  This model resembles Newton type $N$-body system for an interacting particle system. In the sequel, we introduce  Cucker-Smale (C-S) type models  from microscopic to macroscopic sales and under various network topologies. We also summarize state-of-art flocking theorems for the C-S type models and explain how  these models can achieve asymptotic flocking under what conditions and main ideas behind them. For other survey on the related topics, we refer to \cite{CFTV, MT0}. \newline

The rest of this chapter is organized as follows. In Section 2, we present hierarchical models for the description of C-S flocking ensemble starting from the particle to kinetic and fluid desciptions. In Section 3, we introduce three continuous-time C-S type models including the original flocking model \cite{CS2} and discuss the flocking problem for these models.  In Section 4, we present a discrete-time C-S model with leadership structures such as hierarchical and rooted leaders, alternating leadership. In Section 5, we present a mesoscopic description, namely kinetic picture for the C-S flocking. We also discuss flocking particle-fluid interactions via the coupled kinetic-fluid model. In Section 6, we present a C-S hydrodynamic flocking model and its flocking estimate, and then we study its coupling with compressible Navier-Stokes equations through the drag force. \newline

\noindent {\bf Notation}: Throughout the chapter we use a superscript to denote the component of a vector; for example $x :=(x^1, \dots, x^d) \in {\mathbb R}^d$. Subscripts are used  to represent the ordering of particles.  For vectors $x, v \in {\mathbb R}^d$, its $\ell_2$-norm and the inner product are defined as follows:
\[
|x|:=\left(\sum_{i=1}^d (x^i)^2\right)^{\frac{1}{2}}, \qquad
\langle x, v \rangle:=\sum_{i=1}^d x^i v^i,
\]
where $x^i$ and $v^i$ are the $i$-th components of $x$ and $v$, respectively.

\section{Preliminaries} \label{Sec2}
\setcounter{equation}{0}
In this section, we briefly discuss hierarchical C-S models from microscopic scale to mesoscopic and macroscopic scales following the presentation in \cite{HT}. \newline

In \cite{CS1,CS2}, Cucker and Smale introduced a Newton type microscopic model for an interacting many-body system exhibiting a flocking phenomenon, and provide sufficient conditions for an asymptotic flocking (see Definition \ref{D1}) in terms of initial configuration and interaction topologies. We next describe the C-S model. Let $x_i$ and $v_i$ to be the position and velocity of the $i$-th C-S particle, respectively. Then, the C-S model with metric dependent communication weight $\psi$ is given by the following ODE system:
\begin{align}
\begin{aligned} \label{CS}
\frac{dx_i}{dt} &= v_i, \quad t > 0,~~i=1, \dots, N, \\
\frac{dv_i}{dt} &= \frac{K}{N} \sum_{j=1}^{N} \psi(|x_j - x_i|) (v_j - v_i),
\end{aligned}
\end{align}
where $K$ is a nonnegative coupling strength and $\psi$ is a communication weight measuring the degree of communications(interactions) between particles.  For a large C-S system \eqref{CS} with $N \gg 1$, it is not reasonable to integrate the particle model for computational purpose, because it is too expensive to integrate \eqref{CS} numerically even if it is possible. Thus, it is natural to introduce a kinetic model as an approximation for \eqref{CS}. For this, we introduce a kinetic density (one-particle distribution function) $f=f(x,\xi,t)$ at phase space position $(x,\xi)$, at time $t$. Then the spatial-temporal evolution of $f$ is governed by the following Vlasov-McKean equation:
\begin{align}
\begin{aligned} \label{KCS}
& \partial_t f + \xi \cdot \nabla_x f + \nabla_\xi \cdot (F_a(f) f) = 0, \quad (x, \xi) \in \R^d \times \R^d,~t > 0, \\
& F_a(f)(x,\xi,t) = -K \int_{\R^{2d}} \psi(|x-y|) (\xi- \xi_*) f(y,\xi_*) d\xi_* dy.
\end{aligned}
\end{align}
The equation \eqref{KCS} admits a global smooth solution, as long as initial datum is compactly supported in $x$ and $v$ and sufficiently regular (see \cite{HT}). In kinetic theory of gases, it is well known that the velocity moments of $f$ yield the macroscopic observables. For example, for a given $(x,t) \in \R^d \times \R_+$, we set
\begin{align}
\begin{aligned} \label{moments}
\rho &:= \int_{\R^d} f d\xi: \quad \mbox{local mass density}, \\
\rho u &:= \int_{\R^d} \xi f d\xi: \quad \mbox{local momentum density}, \\
\rho E  &:= \rho e + \frac{1}{2} \rho |u|^2: \quad \mbox{local energy density},
\end{aligned}
\end{align}
where $\displaystyle \rho e := \frac{1}{2} \int_{\R^d} |\xi - u(x)|^2 f d\xi$ is the internal energy. Then, macroscopic observables \eqref{moments} satisfy the following hydrodynamic equations:
\begin{align}
\begin{aligned} \label{macro}
& \partial_t \rho + \nabla_x \cdot (\rho u) = 0, \quad x \in \R^d,~~t > 0, \\
& \partial_t (\rho u) + \nabla_x \cdot (\rho u \otimes u + P) = S^{(1)}, \\
& \partial_t (\rho E) + \nabla_x \cdot ( \rho E u + P u + q) = S^{(2)},
\end{aligned}
\end{align}
where $P = (p_{ij})$ and $q = (q_1, \dots, q_d)$ are stress tensor and heat flow, respectively.
\begin{equation} \label{stress}
 p_{ij} := \int_{\R^d} (\xi_i - u_i) (\xi_j - u_j) f d\xi, \quad q_i := \int_{\R^d} (\xi_i - u_i) |\xi-u|^2 f d\xi,
\end{equation}
and the source terms are given by the following relations:
\begin{align}
\begin{aligned} \label{source}
S^{(1)} &:= -K \int_{\R^d} \psi(|x-y|) (u(x) - u(y)) \rho(x) \rho(y) dy, \cr
S^{(2)} &:= -K \int_{\R^d} \psi(|x-y|) (E(x) + E(y) - u(x) \cdot u(y)) \rho(x) \rho(y) dy.
\end{aligned}
\end{align}
Of course, the moment system \eqref{macro} is not closed as it is, because we need to know the third velocity moment of $f$ to calculate the heat flux $q$ in \eqref{stress}. So far, suitable closure conditions
for \eqref{macro}(e.g., the local Maxwellian  for the Botlzmann equation) are not known. In a quasi-flocking regime, we may employ the mono-kinetic ansatz for $f$:
\begin{equation} \label{mono}
 f(x,\xi,t) = \rho(x,t) \delta(\xi - u(x,t)), \quad x, \xi \in \R^d, ~t > 0.
\end{equation}
Then, under this mono-kinetic assumption \eqref{mono}, the stress tensor $P = (P_{ij})$ and heat flux $q$ become zero:
\[
 p_{ij} = 0, \qquad q_i = 0, \quad 1 \leq i, j \leq d.
\]
Thus, in the quasi-flocking regime, the system \eqref{macro}-\eqref{source} is reduced to the pressureless Euler system with a flocking dissipation:
\begin{align}
\begin{aligned} \label{macro-1}
\partial_t \rho + \nabla_x \cdot (\rho u) &= 0, \quad x \in \R^d,~~t > 0, \\
\partial_t (\rho u) + \nabla_x \cdot (\rho u \otimes u) &=  -K \rho \int_{\R^d} \psi(|x-y|)  \rho(y) (u(x) - u(y)) dy, \\
\end{aligned}
\end{align}
Note that the energy equation in \eqref{macro} can be derivable from the equations for $\rho$ and $\rho u$, and the condition \eqref{mono} will be
valid only for the collisionless regime. However, when particles with different microscopic velocities collide, the mono-kinetic ansatz \eqref{mono}
will break down. Therefore, our system \eqref{macro-1} should be regarded as a quasi equilibrium model for the hydrodynamic description of the C-S ensemble.

\begin{remark}\label{rem_max}
If we consider other strong interaction forces such as local alignment and noise, then the density function $f$ is close to a thermodynamical equilibrium $f \sim C_0 \rho e^{-|u - \xi|^2/2}$, and in this case, the dynamics can be well approximated by a compressible isothermal Euler equations with the velocity-alignment force. This rigorous derivation is obtained in \cite{KMT} by employing a relative entropy argument.
\end{remark}

%
%
%
\section{Continuous-time Cucker-Smale type models} \label{continuous}
In this section, we discuss continuous-time C-S models and their flocking estimates.  As discussed in previous section, after Cucker-Smale's seminal works in \cite{CS1, CS2}, several variants of the C-S model have been introduced for better modelings including local and nonsymmetric interactions, collision avoidance and formation control, etc in \cite{MT, PKH}. In the following, we explain how flocking estimates for particle models can be obtained. We first consider a Cauchy problem for the C-S model:
\begin{align}
\begin{aligned} \label{CS}
&{\dot x}_i= v_i,  \quad t > 0, \quad i =1, \dots, N, \\
&{\dot v}_i = \frac{K}{N} \sum_{j=1}^{N} \psi(|x_j - x_i|) (v_j - v_i),
\end{aligned}
\end{align}
subject to initial data
\begin{equation} \label{Ini-1}
(x_i, v_i)(0) = (x_{i0}, v_{i0}),
\end{equation}
where the communication weight  function $\psi: {\mathbb R}_+ \to {\mathbb R}$ is assumed to be Lipschitz continuous, nonnegative and non-increasing:
\begin{equation} \label{comm-weight}
\psi \in \mbox{Lip}(\R_+; \R), \quad \psi \geq 0, \quad (\psi(r_2) - \psi(r_1))(r_2 - r_1) \geq 0, \quad r_1, r_2 \geq 0.
\end{equation}
Before we present flocking estimates for \eqref{CS} - \eqref{Ini-1}, we recall the definition of (mono-cluster) flocking of a many-body system as follows.
\begin{definition} \label{D1}
\emph{\cite{CS1, HT}}
Let ${\mathcal G} := \{(x_i, v_i)\}_{i=1}^N$ be an $N$-body interacting system. Then ${\mathcal G}$ exhibits a asymptotic flocking if and only if the following two relations hold.
\begin{enumerate}
\item
(Velocity alignment): The relative velocities approach to zero asymptotically.
\[ \lim_{t \to \infty} |v_i(t) - v_j(t)| = 0, \quad 1 \leq i, j \leq N. \]
\item
(Spatial coherence): The relative positions are uniformly bounded:
\[ \sup_{0 \leq t < \infty} |x_i(t) - x_j(t)| < \infty, \quad 1 \leq i, j \leq N. \]
\end{enumerate}
\end{definition}
To have some feeling for the large-time dynamics of \eqref{CS}, we consider the simplest system made of two C-S particles on the real line $\R$:
\begin{align}
\begin{aligned} \label{two}
& {\dot x}_1 = v_1, \quad {\dot x}_2 = v_2,  \quad t > 0,~~x_i, v_i \in {\mathbb R}, \\
& {\dot v}_1= \frac{K}{2} \psi(|x_2 - x_1|) (v_2 - v_1), \quad {\dot v}_2 = \frac{K}{2} \psi(|x_1 - x_2|) (v_1 - v_2), \\
& (x_i, v_i)(0)= (x_{i0}, v_{i0}).
\end{aligned}
\end{align}
To reduce the number of equations in \eqref{two}, we introduce the spatial and velocity differences:
\[ x := x_1 - x_2, \quad v := v_1 - v_2. \]
Then, without loss of generality, we may assume
\begin{equation} \label{ini-con}
x_0 > 0, \quad v_0 > 0.
\end{equation}
Note that the differences of $x$ and $v$ satisfy
\[
{\dot x} = v, \quad {\dot v} = -K \psi(|x|) v,
\]
or equivalently,
\[  dv = - K \psi(|x|) dx. \]
We integrate the above relation to obtain
\begin{equation} \label{two-rel}
 v(t) =  v_0 -K \int_{x_0}^{x(t)} \psi(|y|) dy.
\end{equation}
Depending on the relations between the coupling strength $K$ and initial data, we might not have flocking in the sense of Definition \ref{D1}. This negative result can be seen from the following proposition.

\begin{proposition}  \label{P1}
\emph{\cite{CHHJK1}} Suppose that the communication weight $\psi$ takes the following form:
\begin{equation} \label{AAA}
 \psi(|x - y|) = \frac{1}{(1 + |x-y|)^{\beta}}, \quad \beta \geq 0,
\end{equation} 
and let $(x,v)$ be the solution to the system \eqref{two}-\eqref{ini-con} with initial data $(x_0, v_0)$. Then the following assertions hold:
\begin{enumerate}
\item
If $(x_0, v_0)$ satisfies
\begin{equation}\label{New-1}
\displaystyle  v_0 = K \int_{x_0}^{\infty} \psi(|y|) dy,
\end{equation}
then the positions of the two particles diverge with the {\it same} asymptotic velocities.
\item
If $(x_0, v_0)$ satisfies
\begin{equation} \label{New-2}
 v_0 > K \int_{x_0}^{\infty} \psi(|y|) dy,
\end{equation}
then the positions of the two particles diverge with {\it different} asymptotic velocities.
\end{enumerate}
\end{proposition}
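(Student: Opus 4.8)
The plan is to run the whole argument off the exact first integral \eqref{two-rel} together with the monotonicity forced by the sign conditions \eqref{ini-con}. First I would record the qualitative behaviour of the reduced scalar system $\dot x = v$, $\dot v = -K\psi(|x|)v$. Reading the second equation as a linear ODE in $v$ with the (time-dependent) coefficient $K\psi(|x(t)|)$ gives the integrating-factor representation $v(t) = v_0\exp\!\big(-K\int_0^t \psi(|x(s)|)\,ds\big)$, so $v_0 > 0$ forces $v(t) > 0$ for all $t \ge 0$. Consequently $\dot x = v > 0$: the gap $x(t)$ is strictly increasing, stays positive, and we may drop the absolute value, $|x|=x$. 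Moreover $\dot v = -K\psi(x)v < 0$, so $v$ is strictly decreasing. Being monotone and bounded, $x(t)\uparrow x_\infty\in(x_0,\infty]$ and $v(t)\downarrow v_\infty\in[0,v_0]$ as $t\to\infty$. Note that for the tail integral $I:=\int_{x_0}^{\infty}\psi(y)\,dy$ appearing in \eqref{New-1}--\eqref{New-2} to be finite one needs $\beta>1$; this is the regime in which the hypotheses are non-vacuous, and I assume it throughout.

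Next I would extract the value of $v_\infty$ from \eqref{two-rel}. Letting $t\to\infty$ in $v(t)=v_0-K\int_{x_0}^{x(t)}\psi(y)\,dy$ and using monotone convergence (legitimate since $\psi\ge 0$ and $I<\infty$) gives the identity
\[
 v_\infty = v_0 - K\int_{x_0}^{x_\infty}\psi(y)\,dy .
\]
The single structural fact I would isolate is the implication
\[
 v_\infty>0 \ \Rightarrow\ x_\infty=\infty,
\]
which is immediate: if $v_\infty>0$ then $\dot x = v\ge v_\infty$ for all $t$, whence $x(t)\ge x_0+v_\infty t\to\infty$. Both assertions then follow by feeding the two hypotheses into this framework.

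For assertion (2), suppose \eqref{New-2} holds, $v_0>KI$. Since $x(t)<\infty$ and $\psi\ge 0$ we have $\int_{x_0}^{x(t)}\psi \le I$, so \eqref{two-rel} yields the uniform lower bound $v(t)\ge v_0-KI>0$. Hence $v_\infty\ge v_0-KI>0$, and by this implication $x_\infty=\infty$; the positions diverge. Passing to the limit in \eqref{two-rel} then gives $v_\infty=v_0-KI>0$, i.e.\ the velocity gap tends to a strictly positive constant, so the two particles diverge with \emph{different} asymptotic velocities.

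Assertion (1) is the delicate borderline case $v_0=KI$, and ruling out a finite position gap is the only real obstacle. I would argue by contradiction: if $x_\infty<\infty$, then the limit identity gives $v_\infty = v_0-K\int_{x_0}^{x_\infty}\psi = K\int_{x_\infty}^{\infty}\psi$, which is \emph{strictly} positive because $\psi>0$ on the nondegenerate interval $(x_\infty,\infty)$. But $v_\infty>0$ forces $x_\infty=\infty$ by the implication above, contradicting $x_\infty<\infty$. Therefore $x_\infty=\infty$ and the positions diverge. Finally, letting $t\to\infty$ in \eqref{two-rel} and using $v_0=KI$ gives $v_\infty=v_0-KI=0$, so $|v_1(t)-v_2(t)|\to 0$: the particles separate spatially yet share the \emph{same} asymptotic velocity. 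In the language of Definition \ref{D1}, velocity alignment holds while spatial coherence fails, so there is no flocking.
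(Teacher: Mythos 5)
Your proof is correct, but it takes a genuinely different route from the paper's. The paper exploits the specific algebraic form \eqref{AAA}: in case (1) it computes the tail integral in closed form, reduces the dynamics to the autonomous first-order ODE $\dot x = \frac{K}{\beta-1}(1+x)^{1-\beta}$, and integrates it explicitly, obtaining $x(t)=\bigl(\frac{\beta Kt}{\beta-1}+(1+x_0)^\beta\bigr)^{1/\beta}-1$ and hence not only $x\to\infty$, $v\to 0$ but also the quantitative decay rate $v(t)\sim t^{-(1-1/\beta)}$; in case (2) it likewise writes $\dot x = v_\infty + \frac{K}{\beta-1}(1+x)^{1-\beta}$ and invokes comparison. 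You instead run a soft-analysis argument: monotone limits $x_\infty$, $v_\infty$ from the first integral \eqref{two-rel}, the structural implication $v_\infty>0\Rightarrow x_\infty=\infty$, and, for the borderline case (1), a contradiction ruling out $x_\infty<\infty$ via strict positivity of $K\int_{x_\infty}^{\infty}\psi$. Your argument is more robust and more general --- it uses only that $\psi$ is positive, nonincreasing, and has a finite (integrable) tail, so it covers any such communication weight, and you make explicit the tacit standing assumption $\beta>1$ needed for the hypotheses to be non-vacuous, which the paper leaves implicit. What you give up is the quantitative information: the paper's explicit solution yields the algebraic convergence rate of $v$ and the sublinear growth $x(t)\sim t^{1/\beta}$ in case (1), and the linear-in-$t$ lower bound on separation in case (2), none of which your qualitative limit argument produces. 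Both proofs establish the proposition as stated; only minor housekeeping (global existence of the solution of the reduced system, immediate since $\psi$ is bounded and Lipschitz) is elided in both.
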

\begin{proof} (i) Suppose $(x_0, v_0)$ satisfies
\[ v_0 = K \int_{x_0}^{\infty} \psi(|y|) dy. \]
Using \eqref{comm-weight}, \eqref{two-rel}, and \eqref{New-1}, we obtain
\begin{equation} \label{AAA-1}
 v(t)  =  v_0 -K \int_{x_0}^{x(t)} \psi(|y|) dy = K\int_{x(t)}^{\infty} \psi(|y|) dy > 0.
\end{equation}
On the other hand, since $\frac{dx}{dt} = v > 0$ and $x(0) = x_0 > 0$, we have
\begin{equation} \label{AAA-2}
 x(t) > 0, \quad \mbox{i.e.,} \quad  \psi(|x|) = \psi(x). 
\end{equation} 
We now use \eqref{AAA}, \eqref{AAA-1} and \eqref{AAA-2} to find a first-order equation for $x$:
\begin{equation} \label{New-3}
 \frac{dx}{dt}= K\int_{x(t)}^{\infty} \psi(y) dy =  \frac{K}{\beta-1} \frac{1}{\left(1+x(t)\right)^{\beta-1}}.
\end{equation}
Directly integration \eqref{New-3} yields
\[
  \displaystyle x(t)= \left(\frac{\beta Kt}{\beta-1}+(1+x_0)^\beta\right)^{1/\beta}-1, \qquad  v(t)=\frac{K}{\beta-1}\left(\frac{\beta Kt}{\beta-1}+(1+x_0)^\beta\right)^{1/\beta-1}.
 \]
The above explicit formula implies
\[ \lim_{t \to \infty} x(t) = \infty, \qquad \lim_{t \to \infty} v(t) = 0. \]
Note that the velocity difference of $v$ goes to zero at the rate of $t^{-(1-1/\beta)}$.  \newline

\noindent (ii) Suppose $(x_0, v_0)$ satisfies \eqref{New-2}. It follows from \eqref{two-rel} that
\begin{align}
\begin{aligned} \label{New-4}
v(t) &=  v_0 -K \int_{x_0}^{x(t)} \psi(|y|) dy \\
      &= v_0 - K \int_{x_0}^{\infty} \psi(|y|)dy + K \int_{x(t)}^{\infty} \psi(|y|) dy.
\end{aligned}
\end{align}
Note that \eqref{New-4} implies
\[ v(t) \geq v_0 - K \int_{x_0}^{\infty} \psi(|y|)dy > 0, \quad t \geq 0.\]
Thus, the asymptotic velocities are not equal. On the other hand, if we set $$ v_{\infty} := v_0 - K
\int_{x_0}^{\infty} \psi(|y|) dy,$$ then \eqref{New-4} implies
\[
\frac{dx}{dt} = v_{\infty} +
\frac{K}{\beta-1}\left(1+x(t)\right)^{1-\beta}.
\]
Clearly, $x(t)$ increases faster than $ v_{\infty}t$ by the comparison theorem. This completes the proof.
\end{proof}
\begin{remark} \label{R2}
1. It follows from Proposition \ref{P1} that even for a simple two-body system, the flocking theorem is not always true and it depends on the interplay between the coupling strength and initial data. In the following three subsections, we briefly discuss the flocking estimates for the continuous-time C-S model. In the following three subsections, we present three variants of the C-S model with metric dependent communications. \newline

\noindent 2. The coupling strength function \eqref{AAA} appears also in different forms in literature:
\[\psi(|x-y|):~\frac{1}{(1+|x_i(t)-x_j(t)|^2)^\frac{\beta}{2}} \quad \text{or}\quad \frac{1}{(1+|x_i(t)-x_j(t)|)^\beta}.\]
They are in fact equivalent to each other. Thus we may use  $\beta \leftrightarrow \frac{\beta}{2}$ in the models and results interchangeably.
 \end{remark}
\subsection{General symmetric weights}\label{sec_gsw} In this subsection, we briefly review sufficient conditions for the emergence of asymptotic flocking  for the C-S model in \eqref{CS}-\eqref{comm-weight}. Flocking estimate was first studied by Cucker and Smale \cite{CS1}. They provided a sufficient condition on the formation of flocking for an algebraically decaying communication weight $\psi(r) = (1 + r^2)^{-\beta/2}$ with $\beta \geq 0$. For the short-ranged communication weight, they showed that asymptotic flocking is possible for initial configurations close to the flocking state using the self-bounding argument. Later, Cucker and Smale's results were further generalized to general nonincreasing communication weights \eqref{comm-weight} using a simpler energy method and Lyapunov functional approach, which were  based on the $\ell^2$-norm and mixed $\ell^{\infty}-\ell^2$ norms in ~\cite{ACHL, HL, HT}.  For a given configuration $(x, v) \in R^{2dN}$ with a zero sum condition:
\[ \sum_{i=1}^{N} x_i(t) = 0, \qquad  \sum_{i=1}^{N} v_i(t) = 0, \quad t \geq 0, \]
we set
\[
|x|_{\infty} := \max_{1 \leq i \leq N} |x_i|, \quad |v|_{\infty} := \max_{1 \leq i \leq N} |v_i|. \]
Then, the norms $|x|_{\infty}$ and $|v|_{\infty}$ are Lipschitz continuous, and satisfy a system of dissipative differential inequalities:
\begin{equation} \label{SDDI}
\lt| \frac{d}{dt} |x|_{\infty} \rt| \leq |v|_{\infty}, \quad \frac{d}{dt} |v|_{\infty} \leq -K \psi(2|x|_{\infty})
|v|_{\infty} \quad \mbox{a.e.}~~t \in (0, \infty).
\end{equation}
Note that once we have a uniform bound for $|x|_{\infty}$, the second relation in \eqref{SDDI} yields the exponential decay of $|v|_{\infty}$. Thus, we introduce Lyapunov-type functionals ${\mathcal L}_{\pm}(t)
\equiv {\mathcal L}_{\pm}(x(t), v(t))$:
\begin{equation} \label{Lya}
  {\mathcal L}_{\pm}(t) :=  |v(t)|_{\infty} \pm \frac{K}{2} \int_{0}^{2|x(t)|_{\infty}} \psi(s) ds, \quad t \geq 0.
\end{equation}
Then, it is easy to see the non-increasing property of ${\mathcal L}_{\pm}$ using  \eqref{SDDI}:
\[
{\mathcal L}_{\pm}(t) \leq {\mathcal L}_{\pm}(0), \quad t \geq 0,
\]
which leads to the stability estimate of ${\mathcal L}_{\pm}(t)$:
\[
|v(t)|_{\infty} + \frac{K}{2}  \lt| \int_{2|x_0|_{\infty}}^{2|x(t)|_{\infty}} \psi(s)  ds \rt| \leq |v_0|_{\infty}, \quad t \geq 0.
\]
The following theorem is most relevant result on the flocking estimate.
\begin{theorem} \label{T1}
\emph{\cite{ACHL, CS1, HL, HT}}.
Let $(x, v)$ be a solution to \eqref{CS}-\eqref{comm-weight} with initial data $(x_0, v_0)$ satisfying the following condition:
\begin{equation} \label{Cond}
 |x_0|_{\infty} > 0, \quad  |v_0|_\infty <  \frac{K}{2} \int_{|x_0|_\infty}^{\infty} \psi(2r) dr.
 \end{equation}
Then there exists a positive number $x_M$ such that
\[  \sup_{t \geq 0} |x(t)| \leq x_M, \quad  |v(t)| \leq |v_0| e^{-\psi(2x_M) t}, \quad t \geq 0. \]
\end{theorem}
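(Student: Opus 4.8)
The plan is to extract both assertions directly from the dissipative structure already assembled in \eqref{SDDI}--\eqref{Lya}, treating spatial coherence first and velocity alignment second. The crucial observation is that the two claims decouple: the moment one has a uniform bound $|x(t)|_\infty \le x_M$, the second inequality in \eqref{SDDI} becomes a closed linear differential inequality for $|v|_\infty$ and forces exponential decay by Gr\"onwall. Hence essentially all the work lies in producing the finite radius $x_M$, and the threshold hypothesis \eqref{Cond} is precisely what makes this possible.

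To bound the positions I would start from the stability estimate furnished by the monotonicity of $\mathcal{L}_\pm$,
\[
|v(t)|_{\infty} + \frac{K}{2}\left| \int_{2|x_0|_{\infty}}^{2|x(t)|_{\infty}} \psi(s)\,ds \right| \leq |v_0|_{\infty}, \qquad t \ge 0.
\]
The only way $|x(t)|_\infty$ could fail to remain bounded is by exceeding $|x_0|_\infty$, and on that regime $\psi \ge 0$ lets me drop the absolute value, leaving $\frac{K}{2}\int_{2|x_0|_\infty}^{2|x(t)|_\infty}\psi(s)\,ds \le |v_0|_\infty$. After the change of variables $s = 2r$ the hypothesis \eqref{Cond} reads $|v_0|_\infty < \frac{K}{4}\int_{2|x_0|_\infty}^{\infty}\psi(s)\,ds$, so the running integral is pinned strictly below the total tail. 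Since $\psi$ is nonnegative and non-increasing by \eqref{comm-weight}, the map $r \mapsto \int_{2|x_0|_\infty}^{2r}\psi(s)\,ds$ is continuous and nondecreasing; I would therefore define $x_M$ as the radius with $\frac{K}{2}\int_{2|x_0|_\infty}^{2x_M}\psi(s)\,ds = |v_0|_\infty$, the strict inequality in \eqref{Cond} guaranteeing $x_M < \infty$, and conclude $|x(t)|_\infty \le x_M$ for all $t \ge 0$.

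With the position bound secured, the velocity estimate closes at once: since $\psi$ is non-increasing, $\psi(2|x(t)|_\infty) \ge \psi(2x_M)$, so the second relation in \eqref{SDDI} gives $\frac{d}{dt}|v|_\infty \le -K\psi(2x_M)|v|_\infty$ for a.e.\ $t$, and Gr\"onwall's lemma yields $|v(t)|_\infty \le |v_0|_\infty\, e^{-K\psi(2x_M)t}$. Passing from $|v|_\infty$ to the norm $|v(t)|$ in the statement is then a matter of the equivalence of the $\ell^2$ and $\ell^\infty$ norms on $\R^{dN}$, the advertised exponential rate being $K\psi(2x_M)$.

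The main obstacle is exactly the passage from the integral inequality to a pointwise bound on $|x|_\infty$. The stability estimate controls only $\int^{2|x|_\infty}\psi$, not $|x|_\infty$ itself, so extracting a finite $x_M$ requires both the strict gap supplied by \eqref{Cond}, which keeps the running integral away from the total mass of $\psi$, and enough positivity of $\psi$ to invert the integral---guaranteed here since $\psi$ is strictly positive for the algebraic weight \eqref{AAA}. When $\int^\infty \psi = \infty$ (e.g.\ the long-range case $\beta \le 1$ after the identification in Remark \ref{R2}) the hypothesis is vacuous and flocking is automatic; the delicate regime is the short-range one, where the finiteness of $x_M$ hinges entirely on the strict inequality in the threshold condition.
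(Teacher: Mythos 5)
Your proposal is correct and follows essentially the same route as the paper: the text's own argument is exactly the Lyapunov-functional machinery it sets up before the theorem, namely the monotonicity of ${\mathcal L}_{\pm}$ in \eqref{Lya} yielding the stability estimate, the threshold \eqref{Cond} then pinning the running integral below the tail so a finite $x_M$ exists, and the second inequality in \eqref{SDDI} closing via Gr\"onwall once $|x(t)|_{\infty} \leq x_M$. Your observed decay rate $K\psi(2x_M)$ is indeed what \eqref{SDDI} delivers (the missing factor $K$ in the theorem's exponent is an inessential slip in the statement), so there is no substantive difference from the paper's approach.
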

\begin{remark}
1. The result of Theorem \ref{T1} can be restated as follows. For a given initial data $(x_0, v_0)$, there exists a coupling strength $K^*(x_0, v_0) =: 2|v_0|_\infty/\int_{|x_0|_\infty}^{\infty} \psi(2r) \,dr$ such that if $K > K^*(x_0, v_0)$, then we have an exponential flocking. Thus, natural question is to see the large-time dynamics in the regime $K < K^*(x_0, v_0)$ which Theorem \ref{T1} cannot be applied for. In this small coupling strength regime, there might be local flocking(or multi-cluster flocking). Recently this issue has been addressed in a series of papers \cite{CHHJK1, CHHJK2, HKZZ, HKZ}. \newline

\noindent 2. In \cite{ACHL}, C-S model with a singular communication weight is considered. Under certain condition of the initial configurations, the collision avoidance between agents is provided. Later, these conditions are refined in \cite{CCP}.
\end{remark}

\subsection{Non-symmetric interactions} After Cucker-Smale's seminal work \cite{CS2}, one interesting extension of the C-S model has been proposed by Motsch and Tadmor in \cite{MT}. They replaced the symmetric interaction potential $\psi_{ij}$ in the C-S model by a non-symmetric one:
\[ \psi_{ij} \quad \longrightarrow \quad \frac{\psi_{ij}}{\sum_{k=1}^{N} \psi_{ik}}. \]
Thus, the general C-S model proposed by Motsch and Tadmor reads as follows.
\begin{align}
\begin{aligned} \label{CS-MT}
\displaystyle \frac{dx_i}{dt} &= v_i, \quad t > 0,~~i=1, \dots, N, \\
\displaystyle \frac{dv_i}{dt} &= \frac{K}{\sum_{k=1}^{N} \psi(|x_k - x_i|)} \sum_{j=1}^{N} \psi(|x_j - x_i|) (v_j - v_i).
\end{aligned}
\end{align}
This model does not only take into account the distance between agents, but instead, the influence between
agents is scaled in term of their relative distance. Hence, it does not involve any explicit dependence on the number of agents. However, this extension of communication weight destroys the symmetry property of the original C-S model. The symmetry property of the communication weights is essential in energy estimate for the C-S model. Fortunately, the Lypaunov type functional approach introduced in the previous subsection works for this non-symmetric situation. To state their result, we introduce diameters for $x$ and $v$:
\[  D(x) := \max_{1 \leq i, \leq N} |x_i - x_j |, \quad D(v) := \max_{1 \leq i, \leq N} |v_i - v_j |.  \]
Then, by the detailed calculations, they showed that these diameters satisfy a system of dissipative differential inequalities:
\[
\lt| \frac{d D(x)}{dt} \rt| \leq D(v), \quad \frac{d D(v)}{dt} \leq -K \psi^2(D(x)) D(v), \quad \mbox{a.e.,}~t \in (0, \infty).   \]
Then, by using the idea of Lyapunov functional approach \eqref{Lya} depicted in the previous subsection, they obtain the following flocking estimate.
\begin{theorem}
\emph{\cite{MT}}
Suppose that $\psi$ is positive and initial data satisfy
\[ D(v_0) < \int_{D(x_0)}^{\infty} \psi^2(r) dr. \]
Then, the model \eqref{CS-MT} exhibits an asymptotic flocking:
\[ \sup_{0 \leq t < \infty} D(x(t)) < \infty \quad \mbox{and} \quad \lim_{t \to \infty} D(v(t)) = 0. \]
In particular, if $\psi$ has a fat tail such that
\[ \int_c^{\infty} \psi^2(r) dr = \infty, \quad \mbox{for any positive $c$}, \]
then asymptotic flocking occurs for any initial data.
\end{theorem}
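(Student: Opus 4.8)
The plan is to transplant the Lyapunov-functional method of \eqref{Lya} to the non-symmetric setting, driven now by the dissipative differential inequalities for the diameters $D(x)$ and $D(v)$ displayed just above the statement. First I would introduce
\[
\mathcal{L}_+(t) := D(v(t)) + K\int_0^{D(x(t))}\psi^2(s)\,ds,
\]
the exact analogue of \eqref{Lya} with $\psi(2|x|_\infty)$ replaced by $\psi^2(D(x))$. Since the diameters are Lipschitz, $\mathcal{L}_+$ is absolutely continuous and
\[
\frac{d}{dt}\mathcal{L}_+ = \frac{dD(v)}{dt} + K\psi^2(D(x))\,\frac{dD(x)}{dt} \leq -K\psi^2(D(x))\,D(v) + K\psi^2(D(x))\,D(v) = 0 \quad \text{a.e.},
\]
where I use $\frac{dD(v)}{dt}\leq -K\psi^2(D(x))D(v)$ for the velocity term and $\frac{dD(x)}{dt}\leq D(v)$ together with $\psi^2\geq 0$ for the position term. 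Integrating this almost-everywhere inequality gives the global monotonicity $\mathcal{L}_+(t)\leq\mathcal{L}_+(0)$, that is
\[
D(v(t)) + K\int_{D(x_0)}^{D(x(t))}\psi^2(s)\,ds \leq D(v_0), \qquad t\geq 0.
\]

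Next I would read off spatial coherence. Discarding the nonnegative term $D(v(t))$, whenever $D(x(t))\geq D(x_0)$ the balance forces $K\int_{D(x_0)}^{D(x(t))}\psi^2\leq D(v_0)$. Because $\psi>0$, the map $r\mapsto\int_{D(x_0)}^{r}\psi^2(s)\,ds$ is strictly increasing, and the threshold hypothesis (matching the stated condition after absorbing $K$) reads $D(v_0)<K\int_{D(x_0)}^{\infty}\psi^2$; hence there is a finite $x_M$ defined by $K\int_{D(x_0)}^{x_M}\psi^2(s)\,ds=D(v_0)$, and $D(x(t))\leq x_M$ for every $t\geq 0$. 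This yields $\sup_{t\geq 0}D(x(t))\leq x_M<\infty$.

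The decay of $D(v)$ is then immediate. With $D(x(t))\leq x_M$ and $\psi$ positive and non-increasing, $\psi(D(x(t)))\geq\psi(x_M)>0$, so the diameter inequality becomes the linear estimate $\frac{d}{dt}D(v)\leq -K\psi^2(x_M)D(v)$; Gr\"onwall's lemma gives $D(v(t))\leq D(v_0)e^{-K\psi^2(x_M)t}\to 0$, which is velocity alignment. For the fat-tail case $\int_c^{\infty}\psi^2=\infty$ the budget $\int_{D(x_0)}^{\infty}\psi^2$ is infinite, so the threshold hypothesis holds for every initial datum and flocking is unconditional.

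I expect the genuinely model-specific difficulty to lie not in this Lyapunov argument but in the derivation of the diameter inequalities themselves, which the excerpt supplies and I take as given: the per-agent normalization $\big(\sum_k\psi(|x_k-x_i|)\big)^{-1}$ destroys the symmetry that powers the $\ell^2$ energy estimate in the symmetric case, so one cannot simply pair the equations against $v_i-v_j$; tracking the extremal pair realizing each diameter and bounding the normalized weights is precisely what produces the squared factor $\psi^2(D(x))$. Within my argument the only point demanding care is justifying the sign computation for $\frac{d}{dt}\mathcal{L}_+$ at the (measure-zero) set of non-differentiability, handled by the Lipschitz regularity of $D(x)$ and $D(v)$.
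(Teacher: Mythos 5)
Your proposal is correct and follows exactly the route the paper sketches: take the dissipative diameter inequalities as given, apply the Lyapunov functional of \eqref{Lya} with $\psi(2|x|_\infty)$ replaced by $\psi^2(D(x))$, and conclude boundedness of $D(x)$ plus exponential decay of $D(v)$ via Gr\"onwall, with the fat-tail case following immediately. Your handling of the coupling constant (the paper's stated hypothesis omits $K$, effectively normalizing $K=1$, which you flag and absorb) and of the a.e.\ differentiability of the Lipschitz diameters matches the standard treatment, so there is nothing to add.
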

\subsection{Bonding force} For the realistic applications to robotic multi-agent systems, we need to consider the formation control and collision avoidance. For this, we extend the C-S model by introducing additional interaction terms between agents in order to incorporate collision
avoidance between agents, and at the same time achieve tighter spatial configurations. We make use of not only
position but also velocity information of the agents in order to derive
the additional interaction between agents. This results in a control term
which drives agents together or away in such a manner that the distance between agents converge to a non-zero constant value. The C-S model with aforementioned formation control and collision avoidance terms reads as follows.
\begin{align}
\begin{aligned} \label{CS-bond}
&{\dot x}_i= v_i,  \quad t > 0, \quad i =1, \dots, N, \\
&{\dot v}_i = \frac{K_0}{N} \sum_{j=1}^{N} \psi(|x_j - x_i|) (v_j - v_i) + \frac{K_1}{N} \sum_{j=1}^{N} \frac{\langle v_j - v_i, x_j - x_i \rangle}{|x_j  - x_i|} (x_j - x_i) \\
& \hspace{0.4cm} + \frac{K_2}{N} \sum_{j=1}^{N} (|x_j - x_i| - 2R) (x_j - x_i),
\end{aligned}
\end{align}
where $K_0, K_1$ and $K_2$ are nonnegative coupling constants. Due to the translation invariance of \eqref{CS-bond}, without loss of generality, we assume that
\begin{equation} \label{zerosum}
 \sum_{i=1}^{N} x_i(t) = 0, \quad  \sum_{i=1}^{N} v_i(t) = 0, \quad t \geq 0.
\end{equation}
 We define energy functionals:
\[  {\mathcal E} := {\mathcal E}_k + {\mathcal E}_p, \quad {\mathcal E}_k := \frac{1}{2} \sum_{i=1}^{N} |v_i|^2, \quad {\mathcal E}_p := \frac{K_2}{4N} \sum_{1 \leq i, j \leq N}\lt(|x_j - x_i | - 2R \rt)^2, \]
where ${\mathcal E}_k$ and ${\mathcal E}_p$ represent kinetic and potential energies, respectively. Then, it follows from the energy estimates that the total energy ${\mathcal E}$ satisfies dissipation estimate.
\begin{proposition}
\emph{\cite{PKH}}
For some $T \in (0, \infty]$, let $(x, v)$ be a solution to \eqref{CS-bond} - \eqref{zerosum} in the time-interval $[0, T)$. Then, the energy functional ${\mathcal E}$ is non-increasing in time $t$:
\[ {\mathcal E}(t) + \int_0^t {\mathcal P}(\tau) d\tau = {\mathcal E}(0), \quad t \geq 0, \]
where energy production functional $P$ is given by the relation:
\[ {\mathcal P} := \frac{K_0}{2N} \sum_{1\leq i,j \leq N} \psi(|x_j - x_i|) |v_j - v_i|^2 + \frac{K_1}{2N} \sum_{1 \leq i, j \leq N} \lt( \frac{d}{dt} |x_j - x_i |^2 \rt).     \]
\end{proposition}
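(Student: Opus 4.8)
The plan is to show that $\mathcal{E}$ is a Lyapunov functional by differentiating it along solutions and identifying the dissipation rate with $-\mathcal{P}$; the integral identity then follows by integrating from $0$ to $t$. Throughout I work on the interval $[0,T)$ on which the solution is $C^1$ and collision-free, so that $|x_j-x_i|>0$ and every term — including the kernels carrying $|x_j-x_i|$ in the denominator — is differentiable and differentiation under the finite sums is legitimate.

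First I would differentiate the kinetic part. Since $\mathcal{E}_k=\frac{1}{2}\sum_i|v_i|^2$, we have $\frac{d}{dt}\mathcal{E}_k=\sum_i\langle v_i,\dot v_i\rangle$, into which I substitute the three force terms of \eqref{CS-bond}. Each resulting double sum pairs a kernel that is \emph{symmetric} under the exchange $i\leftrightarrow j$ — namely $\psi(|x_j-x_i|)$, the scalar $\frac{\langle v_j-v_i,\,x_j-x_i\rangle}{|x_j-x_i|}$, and $(|x_j-x_i|-2R)$ — with a factor ($x_j-x_i$ or $v_j-v_i$) that is \emph{antisymmetric}. The key step is therefore to relabel $i\leftrightarrow j$ in each sum and average the two forms; this replaces the one-sided $\langle v_i,\cdot\rangle$ by the symmetric $\tfrac12\langle v_i-v_j,\cdot\rangle$, producing $-\frac{K_0}{2N}\sum_{i,j}\psi(|x_j-x_i|)|v_j-v_i|^2$ from the $K_0$ term — exactly the first component of $\mathcal{P}$ — and a manifestly signed quantity from the $K_1$ term, which reduces to the $K_1$-component of $\mathcal{P}$ after invoking $\frac{d}{dt}|x_j-x_i|^2=2\langle x_j-x_i,v_j-v_i\rangle$.

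Next I would handle the potential part. Using the chain rule $\frac{d}{dt}|x_j-x_i|=\frac{\langle x_j-x_i,\,v_j-v_i\rangle}{|x_j-x_i|}$, differentiating $\mathcal{E}_p$ yields a double sum built from $(|x_j-x_i|-2R)\,\frac{\langle x_j-x_i,\,v_j-v_i\rangle}{|x_j-x_i|}$. The crucial observation is that the bonding ($K_2$) force is precisely $-\nabla_{x_i}\mathcal{E}_p$, so the symmetrized $K_2$ contribution to $\frac{d}{dt}\mathcal{E}_k$ cancels $\frac{d}{dt}\mathcal{E}_p$ term by term. Collecting what survives gives $\frac{d}{dt}\mathcal{E}=-\mathcal{P}$, and integrating over $[0,t]$ produces the stated balance law $\mathcal{E}(t)+\int_0^t\mathcal{P}(\tau)\,d\tau=\mathcal{E}(0)$. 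The zero-sum normalization \eqref{zerosum} is used only to guarantee that $\mathcal{E}$ is finite and that the decomposition into kinetic and potential parts is the natural one.

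I expect the main obstacle to be the bookkeeping in the potential/$K_2$ cancellation: one must recognize $\mathcal{E}_p$ as the potential generating the bonding force and match the chain-rule derivative of $\mathcal{E}_p$ against the symmetrized $K_2$ contribution, making sure the factor $\frac{1}{|x_j-x_i|}$ coming from $\frac{d}{dt}|x_j-x_i|$ lines up with the directional factor in the force. The index relabelings and the sign-tracking that converts every $\langle v_i,\cdot\rangle$ into $\tfrac12\langle v_i-v_j,\cdot\rangle$ are routine, but they must be carried out uniformly across all three interaction terms so that no cross term is double-counted.
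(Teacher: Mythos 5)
Your overall strategy --- differentiate $\mathcal{E}_k$ along the flow, symmetrize each double sum by the $i\leftrightarrow j$ relabeling, and cancel the $K_2$ contribution against $\frac{d}{dt}\mathcal{E}_p$ --- is the standard energy estimate behind this proposition (the chapter states it without proof, deferring to \cite{PKH}), and your $K_0$ computation is correct. But both of your remaining identifications fail for the system \emph{as printed} in \eqref{CS-bond}, and your write-up asserts them rather than checks them. First, the symmetrized $K_1$ contribution to $\frac{d}{dt}\mathcal{E}_k$ is
\[
-\frac{K_1}{2N}\sum_{1\le i,j\le N}\frac{\langle v_j-v_i,\,x_j-x_i\rangle^2}{|x_j-x_i|}
=-\frac{K_1}{8N}\sum_{1\le i,j\le N}\frac{1}{|x_j-x_i|}\left(\frac{d}{dt}|x_j-x_i|^2\right)^{2},
\]
which is \emph{quadratic} in the relative velocities; the term $\frac{K_1}{2N}\sum_{i,j}\left(\frac{d}{dt}|x_j-x_i|^2\right)$ appearing in the stated $\mathcal{P}$ is \emph{linear} in $v$ and has no sign, so it is not ``manifestly signed,'' and your claimed reduction via $\frac{d}{dt}|x_j-x_i|^2=2\langle x_j-x_i,v_j-v_i\rangle$ is a type mismatch --- that identity cannot convert a square into a first power. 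Second, for the printed bonding force the claim that it equals $-\nabla_{x_i}\mathcal{E}_p$ is false: one computes $-\nabla_{x_i}\mathcal{E}_p=\frac{K_2}{N}\sum_j(|x_j-x_i|-2R)\frac{x_j-x_i}{|x_j-x_i|}$, which differs from $\frac{K_2}{N}\sum_j(|x_j-x_i|-2R)(x_j-x_i)$ by a factor $|x_j-x_i|$, and the resulting leftover
\[
\frac{K_2}{2N}\sum_{1\le i,j\le N}(|x_j-x_i|-2R)\,\langle v_j-v_i,\,x_j-x_i\rangle\left(\frac{1}{|x_j-x_i|}-1\right)
\]
has no sign, so the claimed term-by-term cancellation does not occur.

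What you have actually run into are two typographical inconsistencies in the chapter: in the original model of \cite{PKH} the $K_1$ and $K_2$ forces carry \emph{unit} direction vectors, i.e.\ $\frac{\langle v_j-v_i,\,x_j-x_i\rangle}{|x_j-x_i|^{2}}(x_j-x_i)$ and $(|x_j-x_i|-2R)\frac{x_j-x_i}{|x_j-x_i|}$, and the second term of $\mathcal{P}$ should read $\frac{K_1}{2N}\sum_{i,j}\left(\frac{d}{dt}|x_j-x_i|\right)^{2}$ (square misplaced in the statement). With those forms your argument goes through verbatim: the symmetrized $K_1$ term becomes exactly $-\frac{K_1}{2N}\sum_{i,j}\left(\frac{d}{dt}|x_j-x_i|\right)^{2}$, and $\mathcal{E}_p$ is precisely the potential of the bonding force. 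Alternatively, taking \eqref{CS-bond} at face value one can still prove an energy identity, but only after replacing $\mathcal{E}_p$ by $\frac{K_2}{2N}\sum_{i,j}\left(\frac{|x_j-x_i|^{3}}{3}-R|x_j-x_i|^{2}\right)$ and the $K_1$ production term by the weighted square above. A correct proof must notice and resolve this mismatch one way or the other; you explicitly flagged ``making sure the factor $\frac{1}{|x_j-x_i|}$ lines up'' as the main obstacle and then asserted that it does, which is exactly where the argument breaks. (Minor point: the zero-sum condition \eqref{zerosum} plays no role in the identity at all --- the sums are finite for $N$ agents --- it is merely a normalization permitted by translation invariance.)
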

This yields the flocking estimate for \eqref{CS-bond}.
\begin{theorem}
\emph{\cite{ACHL, PKH}}
Suppose that the communication weight $\psi$ and initial data satisfy the following conditions.
\begin{eqnarray*}
&& \psi(r) \geq 0, \quad \exists~r_0 \in (0, \infty] \quad \mbox{such that }~\psi(r) > 0, \quad \mbox{for}~r \leq r_0, \\[1mm]
&& \psi_m := \min \lt\{ \psi(r)~:~ 0 \leq r \leq 2R + \sqrt{\frac{2N {\mathcal E}(0)}{K_2}} \rt\} > 0, \quad {\mathcal E}(0) < K_2 R^2 N.
\end{eqnarray*}
Let $(x,v)$ be a global solution to \eqref{CS-bond} - \eqref{zerosum}. Then, the following assertions hold.
\begin{enumerate}
\item
Asymptotic flocking occurs.
\[ \sup_{0 \leq t < \infty} |x_i(t) - x_j(t)| < 2R + \sqrt{\frac{2N {\mathcal E}(0)}{K_2}}, \quad \lim_{t \to \infty} |v_i(t) - v_j(t)| =0, \quad 1 \leq i, j \leq N.
\]
\item
The collision avoidance is guaranteed.
\[ \inf_{0 \leq t < \infty} |x_i(t) - x_j(t)| > 0, \quad 1 \leq i, j \leq N. \]
\end{enumerate}
\end{theorem}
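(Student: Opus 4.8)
The plan is to read all three assertions off the single Lyapunov functional $\mathcal{E}$, using the dissipation identity of the preceding Proposition as the engine. Since $\mathcal{P}\ge 0$, that identity gives $\mathcal{E}(t)\le\mathcal{E}(0)$ for all $t\ge 0$, and I would deduce the three claims from (i) the resulting a priori bound on the potential energy, (ii) the finiteness of $\int_0^\infty\mathcal{P}$, and (iii) a finer analysis of the closest pair. I would also use the zero-sum normalization \eqref{zerosum}, so that velocity alignment forces every $v_i\to 0$ and hence each trajectory $x_i(t)$ to converge.

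First I would prove the spatial coherence bound. Because $\mathcal{E}_k\ge 0$, monotonicity of $\mathcal{E}$ yields $\mathcal{E}_p(t)\le\mathcal{E}(0)$, i.e.
\[ \frac{K_2}{4N}\sum_{1\le i,j\le N}\bigl(|x_j-x_i|-2R\bigr)^2 \le \mathcal{E}(0), \qquad t\ge 0. \]
For any fixed pair the two ordered terms $(i,j)$ and $(j,i)$ already contribute $\tfrac{K_2}{2N}\bigl(|x_i-x_j|-2R\bigr)^2$, so $\bigl(|x_i-x_j|-2R\bigr)^2\le 2N\mathcal{E}(0)/K_2$, which is exactly the claimed bound $|x_i-x_j|\le 2R+\sqrt{2N\mathcal{E}(0)/K_2}$; strictness is inherited from the fact that equality would force $\mathcal{E}_k\equiv 0$ together with all remaining bonds sitting exactly at length $2R$, a configuration excluded by the triangle inequality for $N\ge 3$. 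Write $D_M:=2R+\sqrt{2N\mathcal{E}(0)/K_2}$; the hypothesis $\mathcal{E}(0)<K_2R^2N$ keeps $D_M$ finite and, crucially, places every inter-particle distance inside the interval $[0,D_M]$ on which $\psi_m>0$.

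Next I would prove velocity alignment. Integrating the dissipation identity gives $\int_0^\infty\mathcal{P}(\tau)\,d\tau\le\mathcal{E}(0)<\infty$, hence in particular
\[ \int_0^\infty \frac{K_0}{2N}\sum_{1\le i,j\le N}\psi(|x_j-x_i|)\,|v_j-v_i|^2\,d\tau<\infty. \]
Since Step 1 confines every $|x_j-x_i|$ to $[0,D_M]$, where $\psi(|x_j-x_i|)\ge\psi_m>0$, this forces $\int_0^\infty\sum_{i,j}|v_j-v_i|^2\,d\tau<\infty$. To upgrade integrability to decay I would invoke Barbalat's lemma: the map $t\mapsto\sum_{i,j}|v_j-v_i|^2$ has uniformly bounded derivative, because $\mathcal{E}(t)\le\mathcal{E}(0)$ bounds all $|v_i|$, Step 1 bounds all $|x_j-x_i|$, $\psi$ is bounded on $[0,D_M]$, and the $K_1$-term is regular since Cauchy–Schwarz gives $|\langle v_j-v_i,x_j-x_i\rangle|/|x_j-x_i|\le|v_j-v_i|$, so every $|\dot v_i|$ is uniformly bounded. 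Uniform continuity plus integrability then yields $\sum_{i,j}|v_j-v_i|^2\to 0$, i.e. $\lim_{t\to\infty}|v_i(t)-v_j(t)|=0$.

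The genuinely delicate assertion is collision avoidance, which I expect to be the main obstacle, because neither the energy bound nor the spring furnishes it for free: the $K_2$-force $\tfrac{K_2}{N}(|x_j-x_i|-2R)(x_j-x_i)$, although repulsive when $|x_j-x_i|<2R$, has magnitude $\sim\tfrac{2K_2R}{N}|x_j-x_i|$ that degenerates at contact, and the per-pair potential barrier toward $|x_j-x_i|=0$ stays finite. My plan is therefore to argue by contradiction on the closest pair: setting $d(t):=\min_{i\ne j}|x_i(t)-x_j(t)|$ and isolating the direct $i$–$j$ interaction, I would derive a second-order differential inequality of the form
\[ \ddot d \ge \frac{|w_\perp|^2}{d} + \frac{2K_2}{N}(2R-d)\,d - C_0, \]
valid in the repulsive regime $d\le 2R$, where $w_\perp$ is the transverse relative velocity of the pair and $C_0$ bounds the remaining velocity-damping and $O(1)$ inter-agent contributions, all finite by Steps 1–2. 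The strategy is to combine the centrifugal term $|w_\perp|^2/d$ and the repulsion with the smallness condition $\mathcal{E}(0)<K_2R^2N$ — which keeps the system in the regime $d<2R$ and, through Step 2, drives the residual relative kinetic energy to zero — to exclude $\liminf_t d(t)=0$ and hence produce $\inf_t d(t)>0$. The crux, and where the argument must be carried out most carefully, is handling the two competing degeneracies at contact: the vanishing repulsion against the possibly adverse constant $C_0$, and the head-on case $w_\perp=0$ in which the centrifugal barrier disappears. Here the quantitative use of $\mathcal{E}(0)<K_2R^2N$ together with the asymptotic vanishing of $|v_i-v_j|$, ensuring that no pair retains enough relative speed to overcome even the degenerate barrier in finite time or in the limit, is the step I expect to be technically hardest.
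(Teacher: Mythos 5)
The paper itself gives no proof of this theorem --- it is quoted from \cite{ACHL, PKH}, with the preceding energy-dissipation proposition as the only indicated engine --- so the comparison is with that intended route. Your Steps 1--2 follow it correctly: $\mathcal{E}(t)\le\mathcal{E}(0)$ isolates the per-pair bound $(|x_i-x_j|-2R)^2\le 2N\mathcal{E}(0)/K_2$, and the integrability of the $K_0$-production combined with $\psi\ge\psi_m>0$ on $[0,D_M]$ and Barbalat's lemma (uniform bounds on $|v_i|$, $|x_i-x_j|$ and hence on $|\dot v_i|$) yields velocity alignment. Two caveats there. First, your premise $\mathcal{P}\ge 0$ is false for the $K_1$ term as literally printed (it is a total time derivative, sign-indefinite); the energy computation actually produces $\frac{K_1}{2N}\sum_{i,j}\langle v_j-v_i,x_j-x_i\rangle^2/|x_j-x_i|=\frac{K_1}{2N}\sum_{i,j}|x_{ij}|\bigl(\frac{d}{dt}|x_{ij}|\bigr)^2\ge 0$, and your argument needs this corrected reading (for Step 2 it suffices that the $K_1$ part integrates to a bounded quantity, which your Step 1 supplies). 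Second, your strictness argument via ``the triangle inequality for $N\ge3$'' is wrong: one pair at distance $D_M$ with all other pairs at $2R$ is geometrically realizable whenever $D_M\le 4R$, i.e.\ $\mathcal{E}(0)\le 2K_2R^2/N$, and $N=2$ is untreated. Strictness should instead come from rigidity: equality at some $t_0$ forces $\mathcal{E}(t_0)=\mathcal{E}(0)$, hence $\mathcal{P}\equiv 0$ and (with zero sum) $v\equiv 0$ on $[0,t_0]$, so the configuration is a bond-force equilibrium, incompatible with one bond stretched to $D_M\ne 2R$ while all others sit at $2R$. This is fixable, hence minor.

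The genuine gap is assertion (2). What you offer for collision avoidance is a strategy, not a proof, and you concede the crux yourself: in the head-on case $w_\perp=0$ your inequality degenerates to $\ddot d\ge -C_0$ near contact (the repulsion $\frac{2K_2}{N}(2R-d)d$ vanishes at $d=0$), which cannot prevent $d$ from reaching zero in finite time with incoming radial speed; and since Step 2 gives $|v_i-v_j|\to 0$ without any rate, asymptotic alignment does not exclude a collision at finite time or a slow approach to contact. Worse, the fallback you implicitly lean on --- an energetic barrier --- provably cannot close the argument under the stated hypothesis: a single contact $|x_i-x_j|=0$ costs potential energy only $\frac{K_2}{4N}\cdot 2\cdot(2R)^2=\frac{2K_2R^2}{N}$, which is strictly below the allowed budget $K_2R^2N$ for every $N\ge 2$, so $\mathcal{E}(0)<K_2R^2N$ does not forbid collisions on energy grounds alone. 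Any complete proof must therefore exploit the dynamics near contact --- e.g.\ the radial-damping structure of the $K_1$ force, whose production term $\frac{K_1}{2N}\sum|x_{ij}|\bigl(\frac{d}{dt}|x_{ij}|\bigr)^2$ penalizes exactly the radial approach speed, as in the analyses of \cite{ACHL, PKH} --- and your proposal does not carry this out. As written, parts of (1) are sound but part (2) remains unproven.
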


\section{Discrete-time Cucker-Smale models with leadership}\label{discrete}


In this section, we present the discrete-time C-S model. 
Let $(x_i, v_i)$ denote the position and velocity of the $i$-th particle, 
then the discrete-time C-S model, with time step $h>0$, is governed by \begin{align}
\begin{aligned} \label{c-s-1}
x_i(t+1)&=x_i(t)+hv_i(t), \qquad  i=1,2,\dots,N,\\
 v_i(t+1)&=v_i(t)+h\sum\limits_{j=1}^N \phi(|x_i - x_j|)\left(v_j(t)-v_i(t)\right),\\
   \phi(|x_i - x_j|)&=\frac{1}{(1+|x_i(t)-x_j(t)|^2)^\beta}, \quad\,\, \beta \geq 0.
\end{aligned}
\end{align}


    The   choice of weight function   is a crucial ingredient which
  makes the C-S model attractive: the convergence results depend on conditions on the
initial state only. In contrast,   the convergence results in \cite{JLM} for the linearized Vicsek model rely on
some assumptions on the infinite time-sequence of states.   In the original C-S model, the interactions are  bidirectional, thus, symmetric.    With  bidirectional couplings, they used the Fiedler number of the symmetric
Laplacian matrix to develop some estimates on the iterates of the fluctuations of position and velocity so that the  self-bounding lemma \cite[Lemma 2]{CS1} can be applied.   The pioneering work \cite{CS1} of Cucker and Smale gave the following  flocking theorem.
\begin{theorem}\cite{CS1} Consider the  model \eqref{c-s-1} (which is under all-to-all and symmetric coupling). If $\beta<\frac{1}{2}$, the flocking occurs for any initial data; if $\beta\geq \frac{1}{2}$, the flocking occurs depending on the initial data.
\end{theorem}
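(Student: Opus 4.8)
The plan is to follow the original self-bounding strategy of Cucker and Smale: pass to the fluctuations around the conserved mean, show that the velocity fluctuation contracts at a rate governed by the spectral gap (Fiedler number) of the weighted graph Laplacian, and then close the feedback between position spreading and velocity contraction by a continuation argument whose solvability is exactly what separates the two regimes of $\beta$. First I would remove the mean: summing the velocity update in \eqref{c-s-1} over $i$ and using the symmetry $\phi(|x_i-x_j|)=\phi(|x_j-x_i|)$, the total momentum $\sum_i v_i(t)$ is conserved, so the mean velocity $\bar v$ is constant and the mean position moves linearly. Writing $\hat x_i:=x_i-\bar x$, $\hat v_i:=v_i-\bar v$ and
\[
\gamma(t):=\Big(\sum_{i=1}^N|\hat x_i(t)|^2\Big)^{1/2},\qquad \mu(t):=\Big(\sum_{i=1}^N|\hat v_i(t)|^2\Big)^{1/2},
\]
the fluctuations lie in the subspace orthogonal to the all-ones vector $\mathbf 1$, and asymptotic flocking in the sense of Definition \ref{D1} is equivalent to $\sup_t\gamma(t)<\infty$ together with $\mu(t)\to0$.

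Next I would record the two coupled estimates. The velocity update reads $\hat v(t+1)=(I-hL_\phi)\hat v(t)$, where $L_\phi=\mathrm{diag}(d_i)-A$ is the weighted graph Laplacian with $A_{ij}=\phi(|x_i-x_j|)$ and $d_i=\sum_j\phi(|x_i-x_j|)$. Symmetry of $\phi$ makes $L_\phi$ symmetric and positive semidefinite with kernel spanned by $\mathbf 1$; since $\hat v\perp\mathbf 1$, and provided the step is small enough that $h\lambda_N<1$ (which holds for $hN\le\tfrac12$, because $\phi\le1$ gives $\lambda_N\le2N$), the Fiedler number $\lambda_2$ yields the contraction
\[
\mu(t+1)\le\big(1-h\lambda_2\big)\,\mu(t).
\]
The crucial point is the lower bound $\lambda_2\ge N\,\phi\big(D(x)\big)$ for the complete weighted graph, together with $D(x)\le 2\max_i|\hat x_i|\le 2\gamma$ and the monotonicity of $\phi$, which give $\lambda_2\ge N\,\phi(2\gamma)\ge N(1+c\gamma^2)^{-\beta}$ for an absolute constant $c$. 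On the position side, the update $\hat x_i(t+1)=\hat x_i(t)+h\hat v_i(t)$ and the triangle inequality give
\[
\gamma(t+1)\le\gamma(t)+h\,\mu(t).
\]

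Finally I would close the loop by a continuation (self-bounding) argument in the spirit of \cite[Lemma 2]{CS1}, or equivalently by the discrete analogue of the Lyapunov functional \eqref{Lya}. Suppose $\gamma(s)\le M$ for all $s\le t$; then the contraction rate is bounded below by $hN(1+cM^2)^{-\beta}$, so $\mu$ decays geometrically and $h\sum_{s\ge0}\mu(s)\le \mu(0)\big/\big(N(1+cM^2)^{-\beta}\big)$, whence
\[
\gamma(t+1)\le\gamma(0)+\frac{\mu(0)}{N}\,(1+cM^2)^{\beta}.
\]
The continuation succeeds precisely when this bound is itself $\le M$, i.e. when the inequality $\gamma(0)+\tfrac{\mu(0)}{N}(1+cM^2)^{\beta}\le M$ admits a solution $M$. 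As $M\to\infty$ the left-hand side grows like $M^{2\beta}$, which is dominated by the linear term $M$ if and only if $2\beta<1$. Hence for $\beta<\tfrac12$ a valid $M$ exists for every initial datum, giving unconditional flocking; for $\beta\ge\tfrac12$ the $M^{2\beta}$ term is at least linear, so solvability forces $\mu(0),\gamma(0)$ to be small, and flocking holds only under a closeness-to-flocking condition on the initial data, exactly as in Theorem \ref{T1}.

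I would expect the main obstacle to be this very circularity: the velocity contraction rate depends, through $\phi(D(x))$, on the diameter one is trying to bound, while the diameter is controlled only through the summability of $\mu$. Breaking the loop rigorously — establishing the a priori bound $\gamma\le M$ by a discrete bootstrap under the sole step-size restriction $hN\le\tfrac12$, rather than assuming the bound one wants — is the delicate step, and the threshold $2\beta<1$ for its unconditional solvability is precisely what produces the dichotomy in the statement.
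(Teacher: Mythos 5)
Your proposal is correct and follows essentially the same route the paper attributes to Cucker--Smale: conservation of the mean, contraction of the velocity fluctuation at a rate controlled by the Fiedler number of the symmetric weighted Laplacian (via the bound $\lambda_2 \geq N\phi(D(x))$ on the complete graph), and closure through the self-bounding lemma \cite[Lemma 2]{CS1}, with the dichotomy at $2\beta = 1$ emerging exactly from the solvability of $\gamma(0) + \tfrac{\mu(0)}{N}(1+cM^2)^{\beta} \leq M$. The only point worth flagging is that your step-size restriction $hN \leq \tfrac12$ (ensuring $h\lambda_N \leq 1$ so that $1-h\lambda_2$ is the true contraction factor) is a genuine hypothesis implicit in the discrete-time result, and your induction should state the base case $\gamma(0) \leq M$ explicitly; neither affects the validity of the argument.
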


An example in \cite{CS1} shows that when $\beta>\frac{1}{2}$, the unconditional flocking is not true. Thus, the exponent $1$ is regarded as  the critical exponent for the unconditional flocking. In this section we will briefly introduce some results on discrete-time C-S model with interactions under some leadership. Before we discuss the variant interaction topologies, we first briefly introduce some concepts in graph theory \cite{D}.
A digraph  ${\mathcal{G}} =({\mathcal{V}},{\mathcal{E}} )$ (without self-loops)   representing $(N+1)$ particles with interaction in   C-S model,  is defined by
\[ {\mathcal{V}} :=\{0,1, \dots,N\}, \qquad  {\mathcal{E}} \subseteq {\mathcal{V}} \times {\mathcal{V}}\setminus  \{(i,i):~i\in {\mathcal V}\}. \]
We say $(j,i)\in{\mathcal{E}} $ if and only if  $j$ influences $i$. In this case, we also write $j\in \mathcal L(i)$. The graph ${\mathcal{G}}$ can be regarded as the information flow chart of a network structure; that is,  we   write
\[  j \to i ~~ \iff ~~ (j,i)\in {\mathcal{E}}. \]
A directed path from $j$ to $i$ (of length $n+1$) comprises a sequence
of distinct arcs of the form $j\to k_1\to k_2\to\dots\to k_n\to i$.
The distance from $j$ to $i$ is the length of the shortest path from $j$ to $i$.

\subsection{Hierarchical leadership}
 The general form of  a discrete-time C-S model is given by
\begin{align}\label{c-s-2}
\begin{aligned}
x_i(t+1)&=x_i(t)+hv_i(t), \qquad  i=0,1,\dots,N,\\
 v_i(t+1)&=v_i(t)+h\sum\limits_{j=0}^N \phi_{ij}(x(t))\left(v_j(t)-v_i(t)\right),\\
   \displaystyle\phi_{ij}(x(t))&=\left\{\begin{array}{c}     0\,,   \quad\qquad\quad\qquad \,\,   \, \mathrm{if}~ j\notin \mathcal L(i),\\ \displaystyle \phi(|x_i - x_j|),    \quad \quad\,\,  \mathrm{if}~ j\in \mathcal L(i). \end{array}\right.
\end{aligned}
\end{align}
  Here,  $\mathcal L(i)\subset \{0,1,\dots,N\}$,  regarded as the {\em leader set} of agent $i$, is the set of   agents which influence $i$ directly.
Thus the interaction topology of the C-S model is registered in the configuration of $\mathcal L(i)$, or equivalently, the adjacency matrix $\Phi_x:=(\phi_{ij}(x))$.

  Shen  extended the C-S flocking to an asymmetric structure  in which the interactions are  unidirectional. More precisely, he considered a C-S model under hierarchical leadership,
which means that the agents
 can be partially ordered in such a way that lower-rank
agents are led and only led by some agents of higher ranks.  The formal definition of   hierarchical leadership is as follows.

\begin{definition}\label{defhl} \cite{S}
An $(N+1)$-flock $\{0,1,\dots,N\}$  is said to be under {hierarchical leadership} if the following two statements hold:
\begin{enumerate}[(a)]
\item $j\in \mathcal L(i)$ implies that $j<i$;
\item for any $i>0$, $\mathcal L(i)\neq \emptyset$.
\end{enumerate}
\end{definition}
  Definition \ref{defhl} means that   the adjacency matrix is triangular
under a proper ordering of the agents. 
 For the continuous-time model with hierarchy, Shen used the induction method to prove the unconditional flocking for $\beta<\frac{1}{2}$.     The triangularity of the adjacency matrix is
the key for  the induction method; actually the   idea lies in the fact that the dynamics of  agents $\{0,1,\dots,N\}$ does not change if a new agent $N+1$   ranking lowest is added.  Later, Cucker and Dong extended the induction method to the discrete-time hierarchical model   to improve the critical exponent, see \cite{CD1}. The main result is as follows.
\begin{theorem}\cite{CD1,S} Consider the  model \eqref{c-s-2}  with interaction topology as in Definition \ref{defhl}. If $\beta<\frac{1}{2}$, the flocking occurs for any initial data.
\end{theorem}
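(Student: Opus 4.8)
The plan is to induct on the number of agents, exploiting the triangularity of the adjacency matrix guaranteed by Definition \ref{defhl}. The starting point is that the root agent $0$ has empty leader set (since $j \in \mathcal L(0)$ would force $j < 0$), so its velocity update in \eqref{c-s-2} carries an empty sum and $v_0(t) \equiv v_0(0) =: v_\infty$ is constant. I would then pass to the velocity fluctuations $w_i(t) := v_i(t) - v_\infty$, for which $w_0 \equiv 0$, and restate flocking as $w_i(t) \to 0$ (velocity alignment) together with uniform boundedness of the relative positions (spatial coherence) in the sense of Definition \ref{D1}.

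For the inductive step, suppose the subsystem on $\{0, 1, \dots, N-1\}$ already flocks, with a quantitative bound on $|w_j(t)|$ and a uniform bound on the pairwise distances among these agents. The key structural point is that, by triangularity, the new agent $N$ does not influence any of $0, \dots, N-1$, so this subsystem is autonomous and the induction hypothesis applies verbatim. It remains to control the last agent $N$, whose leader set $\mathcal L(N) \subseteq \{0, \dots, N-1\}$ consists entirely of already-flocked agents. Its fluctuation obeys
\[
w_N(t+1) = \Bigl(1 - h \sum_{j \in \mathcal L(N)} \phi(|x_N - x_j|)\Bigr) w_N(t) + h \sum_{j \in \mathcal L(N)} \phi(|x_N - x_j|)\, w_j(t),
\]
a contractive diagonal part (the coefficient lies in $[0,1]$ provided $h$ is small enough, which is a standing assumption since $\phi \le 1$) driven by the decaying forcing inherited from the leaders.

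The heart of the argument is a scalar self-bounding estimate in the spirit of \cite[Lemma 2]{CS1}, adapted to discrete time by Cucker and Dong \cite{CD1}. Let $D_N(t)$ denote the distance from agent $N$ to the flocked cluster and $V_N(t) := |w_N(t)|$. From the position update and the recursion above one derives a coupled system of the form
\[
D_N(t+1) \le D_N(t) + h V_N(t) + (\text{cluster terms}), \qquad V_N(t+1) \le \bigl(1 - h\,\phi(2D_N(t))\bigr) V_N(t) + h\,\varepsilon(t),
\]
where $\varepsilon(t)$ collects the leaders' fluctuations and tends to $0$ by the induction hypothesis. The exponent condition $\beta < \tfrac12$ enters precisely here: it makes $\phi$ fat-tailed, $\int^\infty \phi(r)\,dr = \infty$ (cf. Remark \ref{R2}), with polynomial divergence $\int^R \phi \sim R^{1-2\beta}$. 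Since the velocities stay bounded, $D_N(t) \lesssim t$, so $\sum_t \phi(2D_N(t)) \gtrsim \sum_t t^{-2\beta} = \infty$; the iterated product $\prod_s (1 - h\,\phi(2D_N(s)))$ then vanishes, forcing $V_N(t) \to 0$ and, after feeding this back, $D_N$ to remain bounded — independently of the initial data.

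The main obstacle is twofold and lives in this last step. First, one must transfer the continuous self-bounding mechanism to the discrete iterates: the weight $\phi$ changes across a single time step, so the frozen-coefficient contraction is only approximate, and controlling $\prod_s(1 - h\,\phi(2D_N(s)))$ requires care with $h$ and with the monotonicity of $\phi$. Second, and more delicate, the estimate is circular: the divergence $\sum_t \phi(2D_N(t)) = \infty$ relies on the a priori linear bound $D_N(t) \lesssim t$, yet the conclusion one wants is that $D_N$ is in fact \emph{bounded}; closing this bootstrap while the forcing $\varepsilon(t)$ is merely decaying (not zero) is the technical core, and the induction must be arranged so that the fat-tail dissipation dominates the slowly-vanishing source before $D_N$ can escape.
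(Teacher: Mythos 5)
Your proposal follows exactly the route the paper indicates for this theorem: induction on the number of agents exploiting the triangularity of the adjacency matrix under hierarchical leadership (so that the sub-flock $\{0,1,\dots,N-1\}$ is autonomous and the root's velocity is constant), combined with the discrete-time self-bounding estimate of Cucker and Dong \cite{CD1} in the spirit of \cite[Lemma 2]{CS1}, where the fat-tail divergence $\int^\infty \phi(r)\,dr = \infty$ for $\beta < \tfrac12$, together with the a priori linear bound $D_N(t)\lesssim t$ and the vanishing of the product $\prod_s\bigl(1-h\,\phi(2D_N(s))\bigr)$, closes the bootstrap. The paper itself only sketches this argument (it is a survey deferring details to \cite{CD1,S}), and your reconstruction — including the quantitative induction hypothesis needed to handle the decaying forcing $\varepsilon(t)$ — matches that intended proof, so there is nothing to flag beyond the technical bookkeeping you already identify.
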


The induction method does not give any sufficient condition for the flocking behavior when   $\beta$ is larger than the critical exponent, i.e., $\beta>\frac{1}{2}$.

\subsection{Individual preference}
A   variant of the hierarchical C-S model is  the  flocking  with individual preference \cite{L}:
\begin{align}
\begin{aligned} \label{cshlinform}
x_i(t+1)&=x_i(t)+hv_i(t), \qquad  i=0,1,\dots,N,\\
 v_i(t+1)&=v_i(t)+h\sum\limits_{j=0}^N \phi_{ij}(x(t))\left(v_j(t)-v_i(t)\right)+h\delta_i(t)q_i(t),\\
   \displaystyle\phi_{ij}(x(t))&=\left\{\begin{array}{c}     0\,,   \qquad\qquad\qquad \,\,   \, \mathrm{if}~ j\notin \mathcal L(i),\\ \displaystyle H\phi(|x_i - x_j|), ~\,  \quad\,\,  \mathrm{if}~ j\in \mathcal L(i). \end{array}\right.
\end{aligned}
\end{align}
Here,  the parameter $H>0$ is incorporated as a  measure of the strength of leader-follower interactions, $q_i(t)\in \mathbb R^3$  describes the temporarily preferred acceleration of agent $i$,  $\delta_i(t)\in\mathbb R$ is a local measure of the consensus at time $t$ which determines the strength of preferred acceleration.  As a  special case, we may choose $q_i(t)\equiv \bar q_i$ for all time $t$; then it represents a constant preferred acceleration of agent $i$.  In \cite{L} a  typical choice of $\delta_i$ depending on its relative velocities with respect to its leaders, which had been inspired by \cite{CH}, was considered: \begin{equation*}\label{c}\delta_i(t)=\frac{1}{\#(\mathcal L(i))}\sum_{j\in \mathcal L(i)}\big|v_j(t)-v_i(t)\big|,\,\,\,\,\,\, i=1,2,\dots, N, \quad \mbox{and} \quad\delta_0(t)\equiv 0. \end{equation*}
Here, $\#(\mathcal L(i))$ denotes the cardinality  of the leader set $\mathcal L(i)$. 
When an agent observes  a consensus in  its leaders and itself, then it tends to give up its own preferred acceleration to follow the social leader-follower forces; otherwise, it will take an acceleration which is a   combination of  the social forces and its own preference;  the strength of its   preference is higher if it finds less consensus.  
On the other hand,   one may assume $|q_i|\leq\nu$ for some $\nu\geq0$ and all $1\leq i\leq N$. 
Under these assumptions the ratio $\frac{H}{\nu}$ expresses a   tradeoff between the social forces and individual preferences.
 Obviously,   the terms $\delta_i(t)q_i(t)$ are state-dependent  even when  $q_i(t)$'s are constant.

A simple example shows that the asymptotic flocking can fail due to a state-dependent individual preference, see \cite[Example
2.1]{L}.
Thus, a natural question is: whether it is possible to find an asymptotic  flocking in the presence of  such perturbations.
The induction method works quite well for the hierarchical C-S flocking \cite{CD1,DM,S}, but it cannot deal with the case of state-dependent
perturbations.

In order to study such a   system,   one may
  consider the ``fluctuation'' system. Let 
\begin{align}\label{reduce}\begin{array}{ll}X=
(X_1, X_2, \dots, X_N)^\top :=
(x_1-x_0, x_2-x_0, \dots, x_N-x_0)^\top,\\
V=(V_1, V_2, \dots, V_N)^\top :=(v_1-v_0, v_2-v_0, \dots,
v_N-v_0)^\top,
\end{array}\end{align}
and denote 
$Q(t)=(\delta_1(t)q_1(t), \delta_2(t)q_2(t), \dots,
\delta_N(t)q_N(t))^\top.$
Then we use the C-S model \eqref{cshlinform} and hierarchical leadership to derive  that
\begin{align*}
\begin{aligned} \label{reducedcshl}
X(t+1)&=X(t)+hV(t),  \\
 V(t+1)&=P_tV(t)+hQ(t),
\end{aligned}
\end{align*}
where $P_t:=I-h L_t$, regarded as the flocking matrix, is given by
\begin{equation*}\label{Pt}P_t=\left(\begin{array}{cccc}
\ 1-hd_1(t) & 0 & \cdots & 0\\
\ h \phi_{21}(t) & 1-hd_2(t) & \cdots & 0\\
\ \cdots & \cdots & \ddots & \cdots\\
\ h \phi_{N1}(t) & h \phi_{N2}(t) & \cdots & 1-hd_N(t)\\
\end{array}
\right).\end{equation*}
Here, the matrices $L_t$ and $P_t$  are acting on $V(t)\in {(\mathbb{R}^3)}^N$ via
the three dimensions individually.  
The crucial idea to deal with the perturbation $hQ(t)$ is a special matrix norm. For $\varepsilon\in(0,1)$, we set an $N\times N$ diagonal  matrix
 \begin{align*}\label{D}D=D_\varepsilon:=\left(\begin{array}{cccc}
\ \varepsilon^{\ell(1)}& 0 & \cdots & 0\\
\ 0 & \varepsilon^{\ell(2)} & \cdots & 0\\
\ \cdots & \cdots & \ddots & \cdots\\
\ 0 & 0 & \cdots & \varepsilon^{\ell(N)}\\
\end{array}
\right),\end{align*}
where $\ell(i)$ is the directed distance
from the leader $0$ to   $i$, i.e., the number of edges in a shortest directed path from $0$ to $i$. For any matrix $A\in \mathbb R^{N\times N}$,  we define  
\begin{equation*}\label{defineMdnorm}\|A\|_\varepsilon:=\|DAD^{-1}\|_\infty,\end{equation*} where $\|\cdot\|_\infty$ denotes the infinity norm of matrices.    For more details we refer to \cite{L}. The crucial advantage of this norm is,  for sufficient small time step $h$,
\[\|P_t\|_\varepsilon\leq 1-(1-\varepsilon)h\phi_m(t)<1,\]
 where $\phi_m{(t)}:=\min_{(i,j), j\in \mathcal L(i)}
 \phi_{ij}(x(t))>0$, see   \cite[Proposition 3.1]{L}.
This well-chosen norm enables us to   apply a boot-strapping self-bounding lemma and find   sufficient conditions  to guarantee the asymptotic flocking. For more details we refer to \cite{L}.

\subsection{Rooted leadership}


A more general framework with leadership was introduced in \cite{LX}:   the   ``rooted leadership''   which
requires that there exists a global leader which is not influenced by any
other agent but
influences them all either directly or indirectly.
\begin{definition}\cite{LX} \label{defrl}  An $(N+1)$-flock $\{0, 1, \dots, N\}$ is
said to be under {\em rooted leadership}, if there exists a root
agent, say $0$, which does not have an incoming path from others, whereas each agent in
$\{1,2,\dots,N\}$ has a directed path from $0$.
\end{definition}
   In this case, the interactions can be unidirectional or bidirectional. Obviously, the hierarchical leadership is a special case of rooted leadership.  The adjacency matrix  is in general neither symmetric nor triangular and the  induction method cannot be applied.  Use the same variable changes as in \eqref{reduce}, we find a compact form
   \begin{align}
\begin{aligned} \label{reducedcsrl}
X(t+1)&=X(t)+hV(t),  \\
 V(t+1)&=P_tV(t),
\end{aligned}
\end{align}
where $P_t:=I-h L_t$.
In \cite{LX} the   $(sp)$ matrix \cite{XG,XL} was employed  to study the flocking   of \eqref{reducedcsrl}.  As a result of the rooted leadership,  the transition matrix $P_t$   turns into an  $(sp)$ matrices when the time step $h$ is small. Based on this observation, the authors could use the infinity norm   to obtain an estimate on the iteration of transition matrices. This idea, together with the  nice self-bounding lemma, leads to the following   result.
   \begin{theorem}\cite{LX} Consider the  model \eqref{c-s-2} with interaction topology as in Definition \ref{defrl}. If $\beta<\frac{1}{2L}$, the flocking occurs for any initial data; if $\beta\geq\frac{1}{2L}$, the flocking occurs depending on the initial data.
\end{theorem}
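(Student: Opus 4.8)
The plan is to pass to the reduced fluctuation system \eqref{reducedcsrl}, $X(t+1)=X(t)+hV(t)$ and $V(t+1)=P_tV(t)$, and to control the long-time behavior of the matrix cocycle $\Pi(t,s):=P_{t-1}P_{t-2}\cdots P_s$. First I would record that for $h$ small each factor $P_t=I-hL_t$ is a nonnegative substochastic $N\times N$ matrix: its off-diagonal entries are $h\phi_{ij}(x(t))\ge 0$, its diagonal entries $1-hd_i$ (with $d_i:=\sum_{j=0}^N\phi_{ij}$) are nonnegative once $h\max_i d_i\le 1$, and the $i$-th row sum equals $1-h\phi_{i0}(x(t))\le 1$. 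The defect in row $i$ is exactly the coupling of agent $i$ to the root $0$ (whose fluctuation is $V_0\equiv 0$), so the root behaves as an absorbing state that drains mass out of $V$.

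Setting $\ell(i)$ for the directed distance from $0$ to $i$ and $L:=\max_{1\le i\le N}\ell(i)$ for the depth of the rooted graph, the crucial estimate is an $L$-step contraction in the infinity norm, $\|\Pi(t+L,t)\|_\infty\le 1-(h\phi_m)^L$, where $\phi_m(t):=\min_{(i,j):\,j\in\mathcal L(i)}\phi_{ij}(x(t))$. To see this I would read row $i$ of the nonnegative product $\Pi(t+L,t)$ as the surviving mass of the absorbing chain started at state $i$ after $L$ steps. Under rooted leadership (Definition \ref{defrl}) agent $i$ has a directed path from $0$ of length $\ell(i)\le L$; reversing it along the leader sets $\mathcal L(\cdot)$ produces a chain path from $i$ to the absorbing root that is travelled with probability at least $(h\phi_m)^{\ell(i)}\ge(h\phi_m)^L$. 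Hence every row leaks at least $(h\phi_m)^L$, which bounds the infinity norm. This is precisely the point at which the $(sp)$-matrix machinery of \cite{XG,XL} substitutes for the induction argument, which is unavailable because the adjacency matrix is neither symmetric nor triangular.

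To close the estimate I would invoke the discrete self-bounding lemma \cite{CS1}. Writing $\Gamma(t):=\|V(t)\|_\infty$ and $\Delta(t):=\|X(t)\|_\infty$, the position update gives $\Delta(t+L)\le\Delta(t)+h\sum_{s=t}^{t+L-1}\Gamma(s)$, while the monotonicity of $\phi$ together with $|x_i-x_j|\le 2\Delta$ yields $\phi_m\ge\phi(2\Delta)$ and hence the one-window decay $\Gamma(t+L)\le\bigl(1-h^L\,\phi(2\Delta(t+L))^L\bigr)\Gamma(t)$. Pairing these two inequalities produces a discrete Lyapunov functional of the type \eqref{Lya}, namely $\Gamma(t)+\kappa\int_{\Delta(0)}^{\Delta(t)}\phi(r)^L\,dr$ is essentially non-increasing for a constant $\kappa=\kappa(h,L)>0$. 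The whole dichotomy then collapses to the convergence of $\int^\infty\phi(r)^L\,dr=\int^\infty(1+r^2)^{-\beta L}\,dr$, whose integrand decays like $r^{-2\beta L}$ and which diverges precisely when $2\beta L\le 1$, i.e. at the threshold $\beta=\frac{1}{2L}$ of the statement. For $\beta<\frac{1}{2L}$ the integral diverges, so the bounded Lyapunov functional forces $\Delta(t)$ to remain bounded for every initial datum and $\Gamma(t)\to 0$, giving unconditional flocking; for $\beta\ge\frac{1}{2L}$ the integral converges and boundedness of $\Delta$ --- hence flocking --- is guaranteed only under the smallness condition $\Gamma(0)<\kappa\int_{\Delta(0)}^{\infty}\phi(r)^L\,dr$ on the initial data.

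The main obstacle is the $L$-step contraction of the second paragraph: establishing $\|\Pi(t+L,t)\|_\infty\le 1-(h\phi_m)^L$ uniformly for a general rooted, non-triangular adjacency structure. This is where the $(sp)$-structure \cite{XG,XL} is indispensable, and it is the emergence of the $L$-th power $\phi^L$ --- with $L$ the number of steps needed for the root's absorbing defect to reach the deepest row --- that degrades the critical exponent from the hierarchical value $\frac12$ down to $\frac{1}{2L}$. A secondary difficulty is bookkeeping: $\phi_m$ varies across each window and depends on the evolving configuration, so one must bound it below by $\phi(2\Delta)$ uniformly over the window and ensure that the per-window constant $\kappa(h,L)$ does not degenerate, so that the Lyapunov comparison in the third paragraph survives the passage through the discrete dynamics.
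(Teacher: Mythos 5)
Your proposal follows essentially the same route the paper sketches for this theorem: reduce to the fluctuation system \eqref{reducedcsrl}, observe that for small $h$ the transition matrix $P_t=I-hL_t$ is a nonnegative substochastic matrix of $(sp)$ type whose row-sum defects encode the coupling to the root, obtain an infinity-norm contraction $\|P_{t+L-1}\cdots P_t\|_\infty\leq 1-(h\phi_m)^L$ for products over windows of length $L$, and close with the self-bounding lemma --- and your absorbing-chain reading of the $L$-step leakage is precisely what the $(sp)$-matrix machinery of \cite{XG,XL} formalizes, so the approach is correct and matches the paper's (and Li--Xue's) argument. One minor slip worth noting: at the boundary $\beta=\frac{1}{2L}$ the integral $\int^\infty (1+r^2)^{-\beta L}\,dr$ still \emph{diverges} (the integrand decays like $r^{-1}$), so your dichotomy does not literally place $\beta=\frac{1}{2L}$ in the conditional regime as the statement does; in the discrete-time setting the critical case is governed by the step-size and per-window constants rather than by convergence of this integral alone.
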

\begin{remark}
The parameter $L$,   referred as  the ``depth'' of the graph,   is the largest distance (in the sense of graph theory)  from the leader to other agents. Note that the conditions are sufficient but not necessary; thus, the critical exponent for this case is still open.
\end{remark}

 The scenario of leadership can be observed in many physical systems, e.g.,
 flocks of flying birds and moving herds, governmental or military leadership, etc.
However,   the connectivity topology might change over time. For example, in the movement of birds flock, some individuals fly so far away from the others that they cannot see each other from time to time. In social networks, it is more realistic to assume that the neighboring agents  keep in connection only for a sequence of time slices rather than at all time instants. In \cite{LHX}, such an extended framework with joint rooted leadership was considered. \begin{definition}\cite{LHX}  The system is under  {\em joint rooted leadership}
across the time interval  $[t_1,t_2)$, $(t_1,t_2\in \mathbb N, ~t_1<t_2)$
if for the union graph of
$\{{\mathcal{G}}_{\sigma(t_1)},{\mathcal{G}}_{\sigma(t_1+1)},\dots,
{\mathcal{G}}_{\sigma(t_2-1)}\}$,  the agent $0$ does not have an incoming path from others, whereas each agent in
$\{1,2,\dots,N\}$ has a directed path from $0$.   \end{definition}
 Concerning the potential applications in engineering, this is relevant in  at least two aspects. First, the failure of connections is very common  due to the    obstacles, faults, disturbances and noise,   and the joint connectivity helps the system to endure such failures which can be recovered after a finite recovery time.  
 Second, it is  relevant to the communication costs
    because more connections entail higher costs.  In \cite{LHX}  
a flocking result was established  for joint rooted leadership, which says that the unconditional flocking occurs for $\beta<\frac{1}{2NT_0}$ where $T_0$ is  the maximum length of the time intervals for the joint connectivity.

\subsection{Alternating leaders}

 In the previous studies   the leader agent is assumed to be fixed in temporal evolution of flocks. This is not realistic.  For example, the dynamic leader-follower relation  in pigeon flocks   was discussed in \cite{NABV}.  Actually, we can often observe that the leaders  can be changed during the  migration of a migrating flock of birds. 
Of course, we can also find alternating leaders in our human social systems, for example, the periodic election of political leaders. In \cite{LH} the flocking with alternating leaders was discussed. 
Use   $\{1,2,\dots,m\}$ to label the admissible neighbor graphs with rooted leadership, then we  write the system    with a switching signal  $\sigma: \mathbb N\to \{1,2,\dots,m\} $ as follows:
\begin{align}\begin{aligned}\label{C-S1}
&x_i(t+1)=x_i(t)+hv_i(t),\qquad  i=1,2,\dots,N, \\
 &v_i(t+1)=v_i(t)+h\sum\limits_{j=1}^N \chi_{ij}^{\sigma(t)} \phi(|x_i - x_j|)\left[v_j(t)-v_i(t)\right],\\
 & \chi_{ij}^{\sigma(t)}=\left\{\begin{array}{c}     0\,,  \quad  \,\,   \, \mathrm{if}~ j\notin \mathcal L^{\sigma(t)}(i),\\ 1, \quad\,\,  \mathrm{if}~ j\in \mathcal L^{\sigma(t)}(i). \end{array}\right.  
\end{aligned}\end{align}

\begin{definition}\cite{LH}\label{defal}
The system  is under {\em rooted leadership with alternating leaders}, if
 the system  is under rooted leadership  at each time slice, but the leader agent, denoted by $r_t$, is dependent on time $t$.
\end{definition}

 Note that for the flocking with symmetric interactions or a fixed leader, such as \cite{CD1,CS1,CS2,HL,L,LHX,LX,S}, the asymptotic velocity   is a priori known, either the average of the initial velocities or just that of the leader. Thus, we can consider the dynamics of   the fluctuations around the average velocity, or around the fixed leader, to study the flocking behavior.  However, in the case of alternating leaders, one cannot a priori know the asymptotic velocity.  To overcome this difficulty, one may combine  the original system and a reference  system.
 Let \begin{align}\begin{aligned}\label{reference}  \hat x &:=(\hat x_1,  \dots, \hat x_{N-1})^\top=
(x_1-x_N,   \dots, x_{N-1}-x_N)^\top, \\
\hat v &:=(\hat v_1,  \dots, \hat v_{N-1})^\top=(v_1-v_N,   \dots,
v_{N-1}-v_N)^\top,
\end{aligned}\end{align}
 which satisfy, by \eqref{C-S1}, the reference system
\[ \label{eqhatx}
\hat x(t+1)=\hat x(t) +h\hat v(t), \quad {\hat v}(t+1)= P_{\sigma(t)}\hat v(t).
\]
On the other hand,   a compact form  of   \eqref{C-S1} reads
\begin{align}\begin{aligned}  \label{reducedflock}
x(t+1)&=x(t)+hv(t),  \\
 v(t+1)&=(I-hL_{\sigma(t)})v(t)=:F_{\sigma(t)}v(t). \end{aligned}\end{align}
In \cite{CMA1, CMA2}, the convergence estimate for the first-order consensus model was studied.  The self-bounding argument enables us to use their estimates in  \cite{CMA1, CMA2} to     find a priori estimate for $v(t)$ in \eqref{reducedflock}. This  easily turns into an estimate for $\hat v(t)$. Then the self-bounding argument can be applied on system \eqref{reference}.  The flocking result is as follows.
   \begin{theorem}\cite{LH} Consider the  model \eqref{c-s-2} with interaction topology as in Definition \ref{defal}. If $ 2\beta(N-1)^2<1$, the flocking occurs for any initial data; if $ 2\beta(N-1)^2\geq 1$, the flocking occurs depending on the initial data.
\end{theorem}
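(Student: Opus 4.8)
The plan is to exploit the two coupled representations prepared in \eqref{reference} and \eqref{reducedflock}: the full velocity obeys the pure consensus iteration $v(t+1)=F_{\sigma(t)}v(t)$ with $F_{\sigma(t)}=I-hL_{\sigma(t)}$, while the fluctuations $\hat v$ (differences relative to agent $N$) obey $\hat v(t+1)=P_{\sigma(t)}\hat v(t)$. The essential obstruction, emphasized in the text, is that with alternating leaders the common asymptotic velocity is not known a priori, so one cannot track fluctuations around a fixed target. The first step is to notice that for $h$ small each $F_{\sigma(t)}$ is row-stochastic with strictly positive diagonal (the current root's row being fixed), so the velocity dynamics is precisely a first-order consensus iteration driven by a product of such matrices. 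The advantage is that consensus estimates control the velocity diameter $D(v(t)):=\max_{i,j}|v_i(t)-v_j(t)|$ \emph{without} reference to the unknown limit, which is exactly what is needed here.

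Next I would import the convergence estimates of \cite{CMA1, CMA2} for the first-order model. Under rooted leadership at each time slice, information propagates from the root to every follower along a directed path of length at most $N-1$, so composing the transition matrices over a window of that length contracts $D(v)$ strictly, provided the instantaneous communication rate $\phi_m(t):=\min_{j\in\mathcal L^{\sigma(t)}(i)}\phi(|x_i-x_j|)$ stays bounded below. Quantitatively a single window-contraction costs a factor controlled by a power of $h\phi_m$, and this is where one factor of $N-1$ enters. This yields an a priori geometric decay of $D(v(t))$ in terms of $\phi_m$, and via $\hat v_i=v_i-v_N$ the same bound transfers to $\hat v(t)$, feeding the reference system \eqref{reference}.

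The core of the argument is then the self-bounding (boot-strapping) lemma of \cite{CS1}. Coupling the position estimate $D(\hat x(t+1))\le D(\hat x(t))+hD(\hat v(t))$ with the geometric decay of $D(\hat v)$ closes a loop: if $\sum_t D(\hat v(t))<\infty$ then $\sup_t D(\hat x(t))<\infty$, hence $\phi_m$ is bounded below (since $\phi(r)=(1+r^2)^{-\beta}$ is decreasing), hence $D(\hat v)$ does decay geometrically and is summable. Applying the self-bounding lemma to \eqref{reference} converts this circular dependence into a genuine conclusion: spatial coherence $\sup_t D(\hat x(t))<\infty$ and velocity alignment $D(\hat v(t))\to 0$, which is asymptotic flocking in the sense of Definition \ref{D1}. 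The exponent in the threshold is assembled from the two powers of $N-1$: one from the path length required for a single contraction, and one from the feedback of the position bound into $\phi\sim(\,\cdot\,)^{-2\beta}$ inside the self-bounding estimate. When $2\beta(N-1)^2<1$ the resulting tail series converges for \emph{any} initial data, giving unconditional flocking; when $2\beta(N-1)^2\ge 1$ the same estimates still close provided $D(\hat v(0))$ is small relative to the initial coherence, which is the conditional-flocking regime (sharpness of the borderline being illustrated by a counterexample as in the earlier theorems).

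The step I expect to be the main obstacle is making the contraction of $D(v)$ genuinely \emph{uniform} over all admissible switching signals $\sigma$. Because the leader alternates, no fixed matrix product is available, and one must verify that every length-$(N-1)$ window that is rooted in the sense of Definition \ref{defal} contracts at a rate that does not degenerate as $\sigma$ ranges over the admissible graphs $\{1,2,\dots,m\}$. This is precisely where the leader-independent consensus estimates of \cite{CMA1, CMA2} are indispensable, and carefully threading their output through the self-bounding lemma so as to recover the sharp exponent $(N-1)^2$ is the delicate quantitative point.
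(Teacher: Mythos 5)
Your overall architecture is the same as the one the paper (following \cite{LH}) sketches: write the full velocity dynamics in the compact consensus form \eqref{reducedflock}, use the fluctuation variables $\hat v_i = v_i - v_N$ satisfying the reference system \eqref{reference}, import the first-order consensus estimates of \cite{CMA1, CMMA2 is a typo---CMA2} to control the velocity spread without knowing the asymptotic velocity, and then close the loop with the self-bounding lemma, with $\phi_m(t)$ bounded below by the position bound. That skeleton is correct, and your closing remark about uniformity over admissible switching signals correctly identifies where the real work lies.

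However, there is a concrete quantitative gap in how you assemble the exponent $(N-1)^2$, and the step you propose would fail as stated. Under arbitrary switching among rooted graphs, a single window of $N-1$ slices does \emph{not} contract $D(v)$: rootedness of each slice only guarantees that the composition of $N-1$ rooted graphs is neighbor-shared, and one needs on the order of $N-1$ such neighbor-shared blocks---hence a window of $(N-1)^2$ slices---before the product of the matrices $F_{\sigma(t)}$ is strongly rooted and its ergodicity coefficient drops strictly below one, with contraction factor of order $1-(h\phi_m)^{(N-1)^2}$. This is precisely the content of the composition results in \cite{CMA1, CMA2}, and it is where \emph{both} powers of $N-1$ originate. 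Your attribution of the second power of $N-1$ to ``the feedback of the position bound into $\phi$'' is incorrect: since $\phi(r)=(1+r^2)^{-\beta}$ and $D(\hat x(t))\lesssim t$ inside the bootstrap, that feedback contributes only the factor $2\beta$ in the exponent, via $\phi_m(t)\gtrsim t^{-2\beta}$. If you run your scheme literally---per-window contraction $1-(h\phi_m)^{N-1}$ plus an alleged extra $N-1$ from the feedback---the divergence criterion for the resulting product becomes $2\beta(N-1)\leq 1$, not the stated threshold $2\beta(N-1)^2<1$. The proof only closes once the $(N-1)^2$-window contraction from \cite{CMA1, CMA2} is threaded through the self-bounding argument on \eqref{reference}, which is exactly how \cite{LH} obtains the unconditional regime $2\beta(N-1)^2<1$ and the conditional regime otherwise.
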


\begin{remark} In \cite{CMA1,CMA2}, the authors studied the exponential consensus  with more general interaction topologies, i.e., the rooted graph or joint rooted graph with a switch. 
 Here, a {\em rooted graph} means it has at least one spanning tree. Thus, the rooted leadership turns into a special case of rooted graph and the case of alternating leaders is a special case of rooted graphs undergoing a switch. 
   Therefore, the methodology in \cite{LH} can be easily extended to the C-S model with such a general interaction topologies, i.e.,  the rooted graph or even the joint rooted graphs with a switch. Indeed, one can easily combine the consensus estimates in \cite{CMA1,CMA2} with the argument in \cite{LH} to cover the general interaction topologies,  slightly changing   the sufficient conditions.   \end{remark}

\section{Kinetic description of Cucker-Smale model}\label{sec_kCS}
In this section, we first briefly present the derivation of the mean-field kinetic equation \eqref{KCS} from the particle system \eqref{CS} using the BBGKY hierarchy in statistical mechanics. We also discuss interactions between flocking particles and fluid.

\subsection{Derivation of the kinetic C-S model}

Let us denote $f^{N}=f^{N}(x_{1},\xi_{1},...,x_{N},\xi_{N},t)$ by the $N$-particle probability density function. Note that the density function $f^N$ is symmetric in its phase-space arguments, i.e.,
\[
f^{N}\left( \cdots,x_{i},\xi_{i},\cdots,x_{j},\xi_{j},\cdots,t\right)=f^{N}\left(\cdots,x_{j},\xi_{j},\cdots,x_{i},\xi_{i},\cdots,t\right),
\]
due to the indistinguishability of particles.

We deduce from the conservation of mass that the time evolution of $f^N$ can be written in the following form of Liouville equation:
\begin{equation}\label{eq_li}
\partial_{t}f^{N}+\sum_{i=1}^{N}\xi_{i}\cdot\nabla_{x_{i}}f^{N}+\frac{1}{N}\sum_{i=1}^{N}\nabla_{\xi_{i}}\cdot\left(\sum_{j=1}^{N}\psi\left(|x_{i}-x_{j}|\right)\,\left(\xi_{j}-\xi_{i}\right)f^{N}\right)=0.
\end{equation}
We next define the marginal distribution $f^N = f^N(x_1,\xi_1,t)$ as
\[
f^{N}\left(x_{1},\xi_{1},t\right):=\int_{\mathbb{R}^{2d(N-1)}}f^{N}\left(x_{1},\xi_{1},x_{-},\xi_{-},t\right)\,dx_{-}\,d\xi_{-},
\]
where
\[
\left(x_{-},\xi_{-}\right):=\left(x_{2},\xi_{2},...,x_{N},\xi_{N}\right).
\]
Integrating the equation \eqref{eq_li} with respect to $dx_{-}d\xi_{-}$, we find that the transport part and the forcing term of \eqref{eq_li} can be estimated as
\[
\int_{\mathbb{R}^{2d(N-1)}}\sum_{i=1}^{N}\xi_{i}\cdot\nabla_{x_{i}}f^{N}\,dx_{-}\,d\xi_{-} = \nabla_{x_{1}}\int_{\mathbb{R}^{2d(N-1)}}f^{N}\,dx_{-}\,d\xi_{-}=\xi_{1}\cdot\nabla_{x_{1}}f^{N}\left(x_{1},\xi_{1},t\right)
\]
and
\begin{align}
\begin{aligned} & \frac{1}{N}\sum_{i=1}^{N}\int_{\mathbb{R}^{2d(N-1)}}\sum_{j=1}^{N}\nabla_{\xi_{i}}\cdot\left(\psi\left(|x_{i}-x_{j}|\right)\left(\xi_{j}-\xi_{i}\right)f^{N}\right)\,dx_{-}d\xi_{-}\\
 & \qquad=\frac{1}{N}\int_{\mathbb{R}^{2d(N-1)}}\sum_{2\leq j\leq N}^{N}\nabla_{\xi_{1}}\cdot\left(\psi\left(|x_1-x_{j}|\right)\,\left(\xi_{j}-\xi_{1}\right)f^{N}\right)\,dx_{-}d\xi_{-},
\end{aligned}
\label{eq:3.2}
\end{align}
respectively. On the other hand, the symmetry property that for $j=2,3,\cdots,N$
$$\begin{aligned}
&\int_{\mathbb{R}^{2d(N-1)}}\psi\left(|x_{1}-x_{2}|\right)\left(\xi_{2}-\xi_{1}\right)f^{N}\,dx_{\text{–}}\,d\xi_{-}\cr
&\qquad =\int_{\mathbb{R}^{2d(N-1)}}\psi\left(|x_{1}-x_{3}|\right)\left(\xi_{3}-\xi_{1}\right)f^{N}\,dx_{-}\,d\xi_{-}
\end{aligned}$$
allows us to estimate \eqref{eq:3.2} as follows.
\begin{equation}
\frac{1}{N}\left(N-1\right)\int_{\mathbb{R}^{2d(N-1)}}\psi\left(|x_{1}-x_{2}|\right)\nabla_{\xi_{1}}\cdot\left(\left(\xi_{2}-\xi_{1}\right)f^{N}\right) \,dx_{-}\,d\xi_{-}.\label{eq:3.3}
\end{equation}
We now define the two-particle marginal function $g^N$ as
\[
g^{N}\left(x_{1},\xi_{1},x_{2},\xi_{2},t\right)=\int_{\mathbb{R}^{2d(N-2)}}f^{N}\text{\,}dx_{3}\,d\xi_{3}...dx_{N}\,d\xi_{N}.
\]
Then, by using the newly defined marginal function $g^N$, we rewrite \eqref{eq:3.3} as
\[
\left(1-\frac{1}{N}\right)\nabla_{\xi_{1}}\cdot\int_{\mathbb{R}^{2d}}\psi\left(|x_{1}-x_{2}|\right)\left(\xi_{2}-\xi_{1}\right) g^{N}\,dx_{2}\,d\xi_{2}.
\]
Hence we have
\[
\partial_{t}f^{N}+\xi_{1}\cdot\nabla_{x_{1}}f^{N}+\left(1-\frac{1}{N}\right)\nabla_{\xi_{1}}\cdot\int_{\mathbb{R}^{2d}}\psi\left(|x_{1}-x_{2}|\right)\left(\xi_{2}-\xi_{1}\right) g^{N}\,dx_{2}\,d\xi_{2}=0.
\]
Then by taking the mean-field limit $N \to \infty$ together with the following notations
\[
f(x_1,\xi_1,t):=\lim_{N\rightarrow\infty}f^{N}(x_{1},\xi_{1},t), \quad g(x_1,\xi_1,x_2,\xi_2,t):=\lim_{N\rightarrow\infty}g^{N}(x_{1},\xi_{1},x_{2},\xi_{2},t),
\]
we obtain that the limiting functions $f$ and $g$ satisfy
\[
\partial_{t}f+\xi_{1}\cdot\nabla_{x_{1}}f+\nabla_{\xi_{1}}\cdot\int_{\mathbb{R}^{2d}}\psi\left(|x_{1}-x_{2}|\right)\left(\xi_{2}-\xi_{1}\right)g\,dx_{2}\,d\xi_{2}=0.
\]
In order to close the above equation, we use the following assumption called {\it propagation of chaos}:
\[
g(x_{1},\xi_{1},x_{2},\xi_{2},t)=f(x_{1},\xi_{1},t)f(x_{2},\xi_{2},t).
\]
Finally, we relabel the position-velocity parameters, $(x_1,\xi_1) \mapsto (x,\xi)$ and $(x_2,\xi_2) \mapsto (y,\xi_*)$ and conclude the one-particle distribution function $f(x,\xi,t)$ satisfies the following Vlasov-type equation:
\begin{align}\label{k_CS}
\begin{aligned}
&\pa_t f + \xi \cdot \nabla_x f + \nabla_\xi \cdot (F_a(f) f) = 0, \quad (x,\xi) \in \R^d \times \R^d, \quad t >0,\cr
&F_a(f)(x,\xi,t) := K\int_{\R^d \times \R^d} \psi(|x-y|) (\xi_* - \xi)f(y,\xi_*) dy d\xi_*.
\end{aligned}
\end{align}
We can adapt the classical result of \cite{Dob} to rigorously derive the kinetic C-S equation \eqref{k_CS} due to the smoothness of the communication weight $\psi$.  More precisely, let us consider the empirical measure $\mu^N(t)$:
\bq\label{emp}
\mu^N(t) = \frac1N \sum_{i=1}^N \delta_{(x_i(t), v_i(t))},
\eq
where $(x_i(t),v_i(t))$ is a solution to the particle system \eqref{CS}. Then we can show that $\mu^N$ satisfies the equation \eqref{k_CS} in the sense of distributions, i.e., $\mu^N$ and $f$ satisfy the same equation. Before stating the mean-field limit result, we introduce several notations: Let us denote by $\mathcal{M}(\R^{2d})$ the set of positive Radon measures and fix $T > 0$. $d_{BL}(\rho_1,\rho_2)$ stands for the bounded and Lipschitz distance between two measures $\rho_1, \rho_2 \in \mathcal{M}(\R^{2d})$, i.e.,
\[
d_{BL}(\rho_1,\rho_2) := \sup_{h \in \mathcal{S}} \lt|\int_{\R^d \times \R^d} h \,d\rho_1 - \int_{\R^d \times \R^d} h \,d\rho_2\rt|,
\]
where $\mathcal{S}$ is given by
\[
\mathcal{S} := \lt\{h:\R^{2d} \to \R ~:~ \|h\|_{L^\infty} \leq 1 \quad \mbox{and} \quad \mbox{Lip(h)} := \sup_{x \neq y}\frac{|h(x) - h(y)|}{|x-y|} \leq 1  \rt\}.
\]

\begin{theorem} \cite{CCR,HL} Given $f_0 \in \mathcal{M}(\R^{2d} )$ compactly supported, take a sequence of $\mu^N_0$ of measures of the form:
\[
\mu^N_0 = \frac1N \sum_{i=1}^N \delta_{(x_i(0), v_i(0))}
\]
such that
\[
\lim_{N \to \infty} d_{BL} (\mu^N_0,f_0) = 0.
\]
Consider $\mu^N(t)$ the empirical measure \eqref{emp} with initial data $(x_i(0),v_i(0))$. Then we have
\[
\lim_{N \to \infty}d_{BL}(\mu^N(t),f(t))=0 \quad \mbox{for} \quad t \geq 0,
\]
where $f$ is the unique measure solution to the equation \eqref{k_CS} with initial data $f_0$.
\end{theorem}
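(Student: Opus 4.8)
The plan is to establish this as a Dobrushin-type stability estimate, whose key structural observation is that the empirical measure $\mu^N(t)$ is \emph{itself} a measure-valued solution of the kinetic equation \eqref{k_CS}. Indeed, for any test function $\varphi \in C^1_c(\R^{2d})$, differentiating $\int \varphi\, d\mu^N(t) = \frac1N \sum_i \varphi(x_i(t),v_i(t))$ in time and inserting the particle system \eqref{CS} reproduces exactly the weak form of \eqref{k_CS}, since the characteristics of \eqref{k_CS} are precisely the C-S ODEs. Thus the whole statement reduces to one \emph{stability inequality}: if $f$ and $g$ are two measure solutions of \eqref{k_CS} with compactly supported data, then
\[
 d_{BL}(f(t),g(t)) \leq C(T)\, d_{BL}(f(0),g(0)), \qquad t \in [0,T],
\]
for a constant $C(T)$ depending only on $T$, $K$, $\psi$ and the support sizes. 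Applying this with $g=f$ the limiting solution and $f=\mu^N$, together with the hypothesis $d_{BL}(\mu^N_0,f_0)\to 0$, yields the conclusion at once; the choice $f(0)=g(0)$ also gives the uniqueness asserted in the statement.

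First I would set up the characteristic flow: for a fixed measure solution $f$ let $Z^f_t(x,\xi)=(X^f_t,\Xi^f_t)$ solve
\[
 \frac{d}{dt}X^f_t = \Xi^f_t, \qquad \frac{d}{dt}\Xi^f_t = F_a(f)(X^f_t,\Xi^f_t,t), \qquad (X^f_0,\Xi^f_0)=(x,\xi),
\]
so that $f(t)=Z^f_t \# f(0)$ is the push-forward of the initial datum along the flow. Two properties of the force field are then needed, both consequences of $\psi$ being bounded, nonnegative and Lipschitz in \eqref{comm-weight}: (i) \emph{phase-space Lipschitz continuity}, $|F_a(f)(z)-F_a(f)(z')|\leq L\,|z-z'|$; and (ii) \emph{stability in the measure}, $|F_a(f)(z)-F_a(g)(z)|\leq C\, d_{BL}(f(t),g(t))$, obtained by writing $F_a(f)-F_a(g)$ as the integral of $(y,\xi_*)\mapsto K\psi(|x-y|)(\xi_*-\xi)$ against $f(t)-g(t)$ and checking, component-wise and after normalization, that this integrand lies in the class $\mathcal{S}$.

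A preliminary step that makes $L$ and $C$ uniform is propagation of compact support. Along the flow $F_a(f)$ is a $\psi$-weighted average pointing toward the instantaneous mean velocity, so the velocity marginal never leaves the convex hull of its initial support; in particular $\sup_t|\Xi^f_t|\leq R_v$ with $R_v$ the initial velocity radius. Positions then spread at most linearly, $|X^f_t|\leq R_x+R_v t$, so on $[0,T]$ all solutions — including every $\mu^N$ — are supported in one fixed compact set $\mathcal{K}_T$, \emph{uniformly in} $N$. This is exactly what pins down $L$ and $C$ in (i)--(ii).

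The final step is the Gronwall closure. Setting $D(t):=\int |Z^f_t(z)-Z^g_t(z)|\, df(0)(z)$ and differentiating, properties (i)--(ii) give
\[
 \frac{d}{dt}D(t) \leq (1+L)\, D(t) + C\, d_{BL}(f(t),g(t)),
\]
while the elementary push-forward bound $d_{BL}(f(t),g(t))\leq D(t)+\Lambda_T\, d_{BL}(f(0),g(0))$ — where $\Lambda_T$ controls the Lipschitz constant of the flow map, again via (i) and Gronwall — closes this into a single linear differential inequality for $D(t)$, whose integration yields $D(t)\leq C(T)\, d_{BL}(f(0),g(0))$ and hence the stability inequality. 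I expect the main obstacle to be the second step: verifying the measure-stability bound (ii) together with the uniform-in-$N$ support control, since $F_a$ is nonlocal and only the interplay of the Lipschitz regularity of $\psi$ with the a priori support bound keeps the relevant test function inside $\mathcal{S}$ with constants independent of $N$ and of the particular solution.
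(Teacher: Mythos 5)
Your proposal is correct and follows essentially the route the paper itself indicates: the paper offers no proof of this theorem, citing \cite{CCR,HL} and remarking only that the classical Dobrushin argument \cite{Dob} adapts because the empirical measure $\mu^N$ is itself a measure solution of \eqref{k_CS} in the sense of distributions --- which is precisely the skeleton you flesh out (push-forward along the characteristic flow, phase-space Lipschitz bound and $d_{BL}$-stability of $F_a$, propagation of compact support, Gronwall closure). One caveat worth recording: uniform-in-$N$ compactness of the supports of $\mu^N_0$ does not follow from $d_{BL}(\mu^N_0,f_0)\to 0$ alone, so to make your constants $L$, $C$, $\Lambda_T$ independent of $N$ you must additionally assume the initial empirical measures are supported in a fixed compact set, as is done in the cited references.
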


\begin{remark}1. We can determine the measure solution $f$ as the push-forward of the initial density $f_0$ through the flow map generated by $(v, F_a(f))$, i.e., for any $h \in \mathcal{C}_c^1(\R^d \times \R^d)$ and $t,s \geq 0$
\[
\int_{\R^d \times \R^d}  h(x,\xi) f(x,\xi,t)\,dxd\xi = \int_{\R^d \times \R^d} h(X(0;t,x,\xi), \Xi(0;t,x,\xi)))f_0(x,\xi)\,dxd\xi,
\]
where $(X,\Xi)$ satisfy
$$\begin{aligned}
&\frac{d}{dt}X(t;s,x,\xi) = \Xi(t;s,x,\xi), \quad X(s;s,x,\xi) = x, \cr
&\frac{d}{dt}\Xi(t;s,x,\xi) = F_a(f)(X(t;s,x,\xi),\Xi(t;s,x,\xi),t), \quad \Xi(s;s,x,\xi) = \xi.\cr
\end{aligned}$$

\noindent 2. In \cite{CCHS,H}, the mean-field limits of C-S type equations with topological interactions and sharp sensitivity regions are studied. In particular, the strategy used in \cite{CCHS} can be applied to other models including nonlocal repulsive-attractive forces locally averaged over sharp vision cones. C-S model with a singular communication weight is considered in \cite{ACHL, CCH2, CCP} and the mean-field limit is studied in \cite{CCP} under suitable condition for the initial configurations. \newline

\noindent 3. Large time behavior of solutions for the equation \eqref{k_CS} is provided in \cite{CFRT, HL, HT}. Kinetic C-S type equations corresponding to \eqref{CS} and \eqref{CS-MT} with noises are treated in \cite{DFT, Choi1} showing the global existence of classical solutions near the global Maxwellian and its large-time behavior.
\end{remark}

\subsection{Interactions between flocking particles and fluids}\label{sec_kf}
In this part, we discuss the interactions between particles and its environment, i.e., fluids. Emergent phenomena of self-organized particles such as flocking, crowd, and swarming behaviors have recently received lots of attention due to the engineering, physical and biological applications. Most available literature for collective behavior deal with only the dynamics of self-organized particles as a closed system, i.e., interactions with fluids and external force fields are often ignored. However, as we can easily imagine, the dynamics of self-organized particles can be strongly influenced by neighbouring fluids and force fields, for example, water, gas and electro- magnetic waves, etc. Thus incorporating these neglected effects in the modelling of the self-organized particles will be necessary. In \cite{BCHK2}, the dynamics of flocking particles governed by the C-S model interacting with viscous compressible fluids through a drag forcing term are taken into account in the spatial periodic domain $\T^3$. More precisely, let $f = f(x,\xi,t)$ be the one-particle distribution function of the C-S flocking particles at $(x,\xi) \in \T^3 \times \R^3$ and $n = n(x,t)$, $v = v(x,t)$ be the local mass density and bulk velocity of the isentropic compressible fluid, respectively. Then the situation we mentioned above is governed by
\begin{align}\label{k_CSNS}
\begin{aligned}
&\pa_t f + \xi \cdot \nabla_x f + \nabla_\xi \cdot \lt(F_a(f) f + F_d(v) f \rt) = 0, \quad  (x,\xi) \in \T^3 \times \R^3, \quad t > 0,\cr
&\pa_t n + \nabla_x \cdot (nv) = 0,\cr
&\pa_t (n v) + \nabla_x \cdot (n v \otimes v) + \nabla_x p(n) + Lv = -\int_{\R^3} F_d(v) f\,d\xi,
\end{aligned}
\end{align}
where the pressure $p$ and the Lam\'e operator $L$ are given by
$$\begin{aligned}
&p(n) = n^\gamma \quad \mbox{with} \quad \gamma > 1,\cr
& Lv = -\mu \Delta_x v - (\mu + \lambda) \nabla_x (\nabla_x \cdot v) \quad \mbox{with} \quad \mu > 0 \quad \mbox{and} \quad \lambda + 2\mu > 0.
\end{aligned}$$
Here we assumed the coupling strength $K = 1$, and $F_a$ and $F_d$ represent the alignment and the drag forces in velocities, respectively:
$$\begin{aligned}
F_a(f)(x,\xi,t) &= \int_{\T^3 \times \R^3} \psi(|x-y|)(\xi_*-\xi)f(y,\xi_*)\,dyd\xi_* \quad \mbox{with} \quad \psi \geq 0,\cr
F_d(x,\xi,t) &= v(x,t) - \xi.
\end{aligned}$$

Recently, this kind of coupled kinetic-fluid system describing the interactions between particles and fluid has received increasing attention due to a number of their applications in the field of, for example, biotechnology, medicine, and in the study of sedimentation phenomenon, compressibility of droplets of the spray, cooling tower plumes, and diesel engines, etc \cite{BDM, RM, SG, VASG}. We refer to \cite{O,Will} for more physical backgrounds of the modelling issues in a kinetic-fluid system.

In the lemma below, we present the properties of conservation and energy estimates for the system \eqref{k_CSNS}. For details of the proof, we refer to \cite{BCHK2}.
\begin{lemma} Let $(f,n,v)$ be a classical solution to the system \eqref{k_CSNS}. Then we have
$$\begin{aligned}
&(i) \mbox{ Conservation of the mass:}\cr
&\qquad \qquad \frac{d}{dt}\int_{\T^3 \times \R^3} f\,dxd\xi = \frac{d}{dt} \int_{\T^3} n\,dx = 0.\cr
&(ii) \mbox{ Conservation of the total momentum:}\cr
& \qquad \qquad \frac{d}{dt}\lt( \int_{\T^3 \times \R^3} \xi f \,dxd\xi + \int_{\T^3} n v\,dx\rt) = 0.\cr
&(iii) \mbox{ Dissipation of the total energy:}\cr
&\qquad \qquad \frac{d}{dt}\frac12\lt( \int_{\T^3 \times \R^3}|\xi|^2 f\,dxd\xi + \int_{\T^3} n|v|^2\,dx + \frac{2}{\gamma-1}\int_{\T^3} n^\gamma \,dx \rt)\cr
&\qquad \qquad \quad = -\int_{\T^6 \times \R^6} \psi(|x-y|)|\xi - \xi_*|^2 f(x,\xi)f(y,\xi_*)\,dxdyd\xi d\xi_*\cr
&\qquad \qquad \qquad - \int_{\T^3 \times \R^3} |v - \xi|^2 f\,dxd\xi.
\end{aligned}$$
\end{lemma}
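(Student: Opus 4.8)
The plan is to derive all three identities as a priori estimates for the given classical solution $(f,n,v)$, by testing the kinetic equation in \eqref{k_CSNS} against $1$, $\xi$, and $\frac12|\xi|^2$, by taking the corresponding moments of the fluid system, and by tracking the evolution of the internal energy $\frac{1}{\gamma-1}\int n^\gamma$; the assertions then follow from structural cancellations between the two subsystems. Throughout I use that every $x$-derivative integrates to zero on $\T^3$ (no boundary) and that $f$ decays fast enough in $\xi$ to license the integrations by parts in velocity. For (i), integrating the Vlasov equation over $\T^3\times\R^3$ annihilates the transport term $\xi\cdot\nabla_x f$ by periodicity and the force term $\nabla_\xi\cdot((F_a(f)+F_d)f)$ by the divergence theorem in $\xi$, giving $\frac{d}{dt}\int f=0$; integrating the continuity equation over $\T^3$ removes $\nabla_x\cdot(nv)$, giving $\frac{d}{dt}\int n=0$.

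For (ii), multiplying the Vlasov equation by $\xi$ and integrating by parts in $\xi$ gives $\frac{d}{dt}\int \xi f\,dxd\xi=\int (F_a(f)+F_d)f\,dxd\xi$. The alignment part vanishes by the Cucker-Smale antisymmetry: relabelling $(x,\xi)\leftrightarrow(y,\xi_*)$ and using $\psi(|x-y|)=\psi(|y-x|)$ maps the integrand of $\int F_a(f)f$ to its own negative, so $\int F_a(f)f=0$ and $\frac{d}{dt}\int \xi f=\int (v-\xi)f\,dxd\xi$. Integrating the momentum equation over $\T^3$ discards the flux $\nabla_x\cdot(nv\otimes v)$, the pressure gradient, and the Lam\'e term $Lv$ (each a total $x$-derivative), leaving $\frac{d}{dt}\int nv\,dx=-\int (v-\xi)f\,dxd\xi$. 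The two drag contributions are exactly opposite, so their sum is conserved; this sign opposition of $F_d$ across the particle and fluid balances is the mechanism.

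Part (iii) is the main work. Testing the Vlasov equation against $\frac12|\xi|^2$ and integrating by parts in $\xi$ yields $\frac12\frac{d}{dt}\int |\xi|^2 f=\int \xi\cdot(F_a(f)+F_d)f$; the symmetrization of (ii), now retaining the nontrivial part, gives the alignment dissipation $\int \xi\cdot F_a(f)f=-\frac12\int\!\int \psi(|x-y|)|\xi-\xi_*|^2 f(x,\xi)f(y,\xi_*)$. For the fluid mechanical energy I use $\pa_t(\frac12 n|v|^2)=v\cdot\pa_t(nv)-\frac12|v|^2\pa_t n$ together with the continuity and momentum equations: the convective contributions from $v\cdot\nabla_x\cdot(nv\otimes v)$ and $\frac12|v|^2\nabla_x\cdot(nv)$ cancel after integration by parts, the Lam\'e term produces the nonpositive viscous dissipation $-\mu\int|\nabla_x v|^2-(\mu+\lambda)\int(\nabla_x\cdot v)^2$, and the pressure term gives $\int p(n)\,\nabla_x\cdot v$. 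This pressure work is exactly cancelled by the internal energy, since $p(n)=n^\gamma$ and the continuity equation yield $\frac{d}{dt}\int \frac{n^\gamma}{\gamma-1}=-\int p(n)\,\nabla_x\cdot v$. Finally the two drag cross-terms $\int \xi\cdot(v-\xi)f$ (from the particle energy) and $-\int v\cdot(v-\xi)f$ (from the fluid energy) combine to $-\int |v-\xi|^2 f$, the drag dissipation.

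The main obstacle is the bookkeeping in the fluid energy identity: one must pair the pressure work against the internal energy and verify that the convective fluxes cancel, so that only the genuinely dissipative alignment, drag, and viscous terms survive. Assembling the pieces reproduces the dissipation in (iii), and I would emphasize that the computation also produces the nonpositive viscous dissipation $-\mu\int|\nabla_x v|^2-(\mu+\lambda)\int(\nabla_x\cdot v)^2$, which therefore belongs on the right-hand side: the displayed relation holds as an equality once these terms are included, and as the inequality $\le$ if they are discarded. Modulo this, every step is justified by the assumed regularity of the classical solution and the decay of $f$ in $\xi$.
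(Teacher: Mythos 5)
Your derivation follows exactly the route of the proof this chapter points to: the paper itself states the lemma without proof and refers to \cite{BCHK2} for the details, and the argument there is precisely your moment-and-energy computation --- testing the kinetic equation against $1$, $\xi$ and $\frac12|\xi|^2$, symmetrizing the alignment term under $(x,\xi)\leftrightarrow(y,\xi_*)$, cancelling the convective fluxes, pairing the pressure work $\int p(n)\,\nabla_x\cdot v\,dx$ against the internal energy $\frac{1}{\gamma-1}\int n^\gamma dx$, and letting the two drag cross-terms combine into $-\int |v-\xi|^2 f$. Parts (i) and (ii) are complete and correct, and you are also right that the viscous dissipation $-\mu\int|\nabla_x v|^2dx-(\mu+\lambda)\int(\nabla_x\cdot v)^2dx$ is produced by the Lam\'e term and is missing from the displayed identity.

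However, your closing reconciliation contradicts your own computation, and you should have pushed it one step further. You correctly derived $\int \xi\cdot F_a(f)f = -\frac12\int\int \psi(|x-y|)|\xi-\xi_*|^2 f(x,\xi)f(y,\xi_*)$, so assembling your pieces proves
\begin{align*}
&\frac{d}{dt}\frac12\left( \int_{\T^3 \times \R^3}|\xi|^2 f\,dxd\xi + \int_{\T^3} n|v|^2\,dx + \frac{2}{\gamma-1}\int_{\T^3} n^\gamma \,dx \right)\\
&\quad = -\frac12\int_{\T^6 \times \R^6} \psi(|x-y|)|\xi - \xi_*|^2 f(x,\xi)f(y,\xi_*)\,dxdyd\xi d\xi_*\\
&\qquad - \int_{\T^3 \times \R^3} |v - \xi|^2 f\,dxd\xi - \mu\int_{\T^3}|\nabla_x v|^2\,dx - (\mu+\lambda)\int_{\T^3}(\nabla_x \cdot v)^2\,dx,
\end{align*}
which does \emph{not} ``reproduce the dissipation in (iii)'': the alignment term in the statement carries coefficient $1$, not $\frac12$. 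Consequently your claim that the displayed relation ``holds as an equality once the viscous terms are included'' is false as written, and the fallback ``$\leq$ if they are discarded'' also fails with coefficient $1$: setting $D:=\int\psi|\xi-\xi_*|^2 ff_*\geq 0$, the true right-hand side contains $-\frac12 D \geq -D$, so the printed expression is neither equal to nor an upper bound for the actual derivative in general. The correct statement, as in \cite{BCHK2}, has the factor $\frac12$ in front of the double integral \emph{and} the viscous terms; the chapter's display thus contains two defects, of which you caught only one. Everything else in your proof is sound under the stated regularity and $\xi$-decay assumptions.
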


The global existence of unique strong solutions for the system \eqref{k_CSNS} is studied in \cite{BCHK2} under suitable assumptions on the initial data such as smallness and smoothness. For the large time behavior of solutions to types of equations \eqref{k_CSNS}, in \cite{Choi2}, the following Lyapunov functional $\mathcal{L}$ measuring the fluctuation of momentum and mass from the averaged quantities is introduced:
$$
\begin{aligned}
\ml(f,\rho,u) &:= \int_{\T^3 \times \R^3} |\xi - \xi_c|^2 f\,dxd\xi + \int_{\T^3} n|v - j_c|^2dx + \int_{\T^3} (n - n_c)^2dx \cr
&\quad + |\xi_c - j_c|^2,
\end{aligned}
$$
where
\[
\xi_c(t) := \frac{\int_{\T^3 \times \R^3} \xi f\,dxd\xi}{\int_{\T^3 \times \R^3} f\,dxd\xi}, \quad  j_c(t) := \frac{\int_{\T^3} n v\,dx}{\int_{\T^3} n \,dx},\quad f_c(t) := \int_{\T^3 \times \R^3} f\,dxd\xi,
\]
and
\[
n_c(t):= \int_{\T^3} n\,dx.
\]
\begin{theorem}\label{thm_kl}\cite{Choi2} Let $(f,n,v)$ be a global classical solution to the system \eqref{k_CSNS} satisfying
\begin{align*}
\begin{aligned}
&(i)\,\,\,\,\,\, \|\rho_f\|_{L^\infty(\R_+;L^{3/2}(\T^3))} < \infty \quad \mbox{where} \quad \rho_f(x,t) := \int_{\R^3} f(x,\xi,t)\,d\xi,\cr
&(ii)\,\,\,\,n(x,t) \in [0, \bar n] \quad \mbox{for all} \quad (x,t) \in \T^3 \times \R_+ \quad \mbox{and} \quad n_c(0) > 0,\cr
&(iii)\,\, v \in L^\infty(\T^3 \times \R_+) \quad \mbox{and} \quad E_0>0 \mbox{ is small enough},
\end{aligned}
\end{align*}
where $E_0$ is the initial total energy given by
\[
E_0:= \int_{\T^3 \times \R^3} |\xi|^2 f_0\,dxd\xi + \int_{\T^3} n_0 |v_0|^2 dx + \frac{2}{\gamma - 1}\int_{\T^3} n_0^\gamma \,dx.
\]
Then we have
\[
\ml(t) \leq C\ml_0e^{-\lambda t} \quad t \geq 0,
\]
where $C$ and $\lambda$ are positive constants independent of $t$.
\end{theorem}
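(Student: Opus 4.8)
The plan is to derive a Grönwall-type differential inequality for the Lyapunov functional, namely $\frac{d}{dt}\ml(t) \le -\lambda\,\ml(t)$ up to remainder terms that are cubic (hence small) in the fluctuations, and then integrate. Before differentiating, I would record the algebraic consequences of the preceding lemma. Since the total particle mass $f_c$ and fluid mass $n_c$ are each conserved and $n_c(0)>0$ by hypothesis (ii), both averaged velocities $\xi_c$ and $j_c$ are well defined for all $t\ge 0$, and conservation of total momentum yields the identity $f_c\,\xi_c + n_c\,j_c \equiv \mathrm{const}$. This is the coupling that lets me control the scalar term $|\xi_c - j_c|^2$ in $\ml$ in terms of the genuinely dissipative quantities.

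I would then exploit the total-energy dissipation identity (part (iii) of the lemma), which provides two nonnegative sinks: the alignment production $\int\!\!\int \psi(|x-y|)|\xi-\xi_*|^2 f f$ and the drag production $\int |v-\xi|^2 f$. On $\T^3$ the metric is bounded, so the nonincreasing positive weight obeys $\psi(|x-y|) \ge \psi_m := \psi(\mathrm{diam}\,\T^3) > 0$; combined with the variance identity $\int\!\!\int|\xi-\xi_*|^2 f f = 2 f_c \int |\xi-\xi_c|^2 f$, this bounds the alignment sink below by $\psi_m f_c \int|\xi-\xi_c|^2 f$, controlling the first term of $\ml$. The drag sink $\int |v-\xi|^2 f$ simultaneously pins the fluid velocity to the particle velocity; splitting $v-\xi = (v-j_c)+(j_c-\xi_c)+(\xi_c-\xi)$ and invoking the momentum identity above, I would bootstrap these two sinks into control of $\int n|v-j_c|^2$ and of $|\xi_c-j_c|^2$.

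The main obstacle is the density fluctuation $\int (n-n_c)^2$, for which the continuity equation $\pa_t n = -\nabla_x\cdot(nv)$ supplies no direct dissipation; decay must be borrowed from the viscous momentum balance, which is a hypocoercivity situation. I would therefore work not with $\ml$ directly but with an equivalent auxiliary functional obtained by adjoining a small multiple of an effective-flux cross term, so that differentiating $\int(n-n_c)^2$ produces a factor $\nabla_x v$ that couples to the Lamé dissipation $-\langle Lv,\,v-j_c\rangle \gtrsim \|\nabla_x(v-j_c)\|_{L^2}^2$; the strict monotonicity of the pressure $p(n)=n^\gamma$ together with the upper bound $n\le \bar n$ of (ii) then delivers coercivity in $(n-n_c)$. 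Hypothesis (i), $\rho_f\in L^\infty_t L^{3/2}_x$, is what tames the drag coupling $-\int_{\R^3}F_d\, f\,d\xi$ (with $F_d=v-\xi$) in the momentum equation: by Hölder and the Sobolev embedding $H^1(\T^3)\hookrightarrow L^6(\T^3)$ one has $\int \rho_f|v-j_c|^2 \le \|\rho_f\|_{L^{3/2}}\|v-j_c\|_{L^6}^2 \lesssim \|\rho_f\|_{L^{3/2}}\|\nabla_x(v-j_c)\|_{L^2}^2$, so this term is absorbable into the viscous dissipation. All remaining contributions — the convective flux $\nabla_x\cdot(nv\otimes v)$ and the higher-order pieces of the pressure and drag couplings — are at least cubic in the fluctuations, and the smallness of the initial energy $E_0$ in (iii) keeps the solution in a regime where they are dominated by the quadratic dissipation.

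Assembling the three blocks gives $\frac{d}{dt}\widetilde\ml(t) \le -\lambda\,\widetilde\ml(t)$ for the auxiliary functional $\widetilde\ml \simeq \ml$, with $\lambda>0$ depending on $\psi_m$, the viscosity, $\gamma$, $\bar n$ and $\|\rho_f\|_{L^{3/2}}$; Grönwall's inequality and the equivalence $\ml \le C\,\widetilde\ml$ then yield $\ml(t)\le C\,\ml_0\,e^{-\lambda t}$, the constant $C\ge 1$ arising precisely from passing back from $\widetilde\ml$ to $\ml$. I expect the delicate point to be the simultaneous closure of the density and fluid-velocity estimates, where the pressure coercivity, the viscous dissipation, and the smallness of $E_0$ must be balanced so that the hypocoercive cross term is small enough to preserve equivalence yet large enough to furnish decay of $(n-n_c)$.
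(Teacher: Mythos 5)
You cannot be checked against an in-paper argument here: the chapter states this theorem without proof, citing \cite{Choi2} (``for details of the proof, we refer to...''-style attribution). That said, your outline reproduces exactly the strategy of the cited work: differentiate a modified Lyapunov functional; harvest the two dissipative sinks from the energy identity (alignment production and drag production $\int |v-\xi|^2 f$); use conservation of the two masses and of total momentum to handle $|\xi_c-j_c|^2$ (the paper itself displays the resulting affine identity for $\xi_c - j_c$ right after the theorem); recover decay of $\int (n-n_c)^2$ hypocoercively by adjoining a small cross term so that the Lam\'e dissipation and the monotone pressure $p(n)=n^\gamma$ with $n\le \bar n$ give coercivity; and use hypothesis (i) via H\"older ($L^{3/2}$--$L^3$) plus $H^1(\T^3)\hookrightarrow L^6(\T^3)$ to absorb the drag coupling into the viscosity, with $E_0$-smallness taming the cubic remainders. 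This is the correct architecture and matches the reference.

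Two points in your sketch need repair. First, the lower bound $\psi(|x-y|)\ge \psi_m>0$ is not available: the system \eqref{k_CSNS} only assumes $\psi\ge 0$, so the communication weight may vanish at distances realized in $\T^3$ and the alignment sink cannot be bounded below. Fortunately it is also not needed: the drag dissipation alone, combined with viscosity and Poincar\'e, controls $\int f|\xi-\xi_c|^2\,dxd\xi$ through the splitting $\xi-\xi_c=(\xi-v)+(v-j_c)+(j_c-\xi_c)$, which is why the result in \cite{Choi2} holds even for $\psi\equiv 0$; your argument should route the particle-velocity variance through the drag term rather than through $\psi_m$. Second, Poincar\'e--Sobolev applies to $v-\bar v$ with $\bar v$ the spatial mean, not directly to $v-j_c$ (which is the $n$-weighted mean); the discrepancy satisfies $n_c|j_c-\bar v|\le \|n-n_c\|_{L^2}\|v-\bar v\|_{L^2}$, i.e.\ it is quadratic in the fluctuations and absorbable under the smallness of $E_0$, but since your absorption of $\int\rho_f|v-j_c|^2$ is stated via $\|v-j_c\|_{L^6}$, this step must be made explicit or the Sobolev bound is not literally valid. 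With these two corrections your plan closes along the same lines as the cited proof.
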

Theorem \ref{thm_kl} shows the alignment between flocking particles and fluid velocities as time goes on exponentially fast. More precisely, it follows from conservations of masses and total momentum that
$$\begin{aligned}
&\xi_c(t) - j_c(t) \cr
&\quad = (f_c(0) + 1)\xi_c(t) - \frac{1}{n_c(0)}\lt(\int_{\T^3 \times \R^3} \xi f_0(x,\xi)\,dxd\xi - \int_{\T^3} n_0(x) v_0(x)\,dx\rt).
\end{aligned}$$
This yields
\[
\xi_c(t),\,j_c(t) \to \frac{1}{n_c(0)\lt(f_c(0) + 1\rt)}\lt(\int_{\T^3 \times \R^3} \xi f_0(x,\xi)\,dxd\xi - \int_{\T^3} n_0(x) v_0(x)\,dx\rt),
\]
as $t \to \infty$. We notice that it is natural to expect from the presence of the drag forcing term in the kinetic and fluid equations \eqref{k_CSNS}.

For the case when the fluid is incompressible, the global well-posedness and {\it a priori} estimate of large-time behaviors of solutions are studied in \cite{BCHK, BCHK3, BCHK4, CL, CK}. In particular, the density dependent drag forcing term which is more physically relevant is considered in \cite{CK2} and the global existence of strong solutions and large-time behavior are obtained. In \cite{CCK}, the dynamics of particles immersed in an incompressible fluid through local alignments is taken into account. Unlike the C-S alignment force $F_a$, each particle actively tries to align its velocity to that of its closest neighbors. For this system, the global existence of weak solutions, hydrodynamic limit corresponding to strong noise and local alignment, and large-time behavior of solutions are established. Very recently, the finite-time blow-up phenomena of classical solutions to \eqref{k_CSNS} and other related systems under suitable assumptions on the initial configurations are provided in \cite{Choi5}.

\section{Hydrodynamic descriptions for flocking behavior}
In this section, we discuss hydrodynamic models describing the behavior of flocking behavior of the C-S ensemble. We first deal with
the hydrodynamic C-S model introduced in Section 2 and then discuss its coupling with isentropic Navier-Stokes equations via the drag force.
\subsection{A hydrodynamic Cucker-Smale model}\label{sec_h_CS}
In this part, we discuss a hydrodynamic C-S model:
\begin{align}\label{h_CS}
\begin{aligned}
&\pa_t \rho + \nabla \cdot (\rho u) = 0, \quad x \in \om, \quad t > 0\cr
&\pa_t (\rho u) + \nabla \cdot (\rho u \otimes u) = \int_{\om} \psi(|x - y|) (u(y) - u(x)) \rho(x) \rho(y)\,dy,
\end{aligned}
\end{align}
subject to initial density and velocity
\[
(\rho(x, t), u(x, t))|_{t = 0} = (\rho_0(x),u_0(x)) \quad x \in \om.
\]
Here we again assumed the coupling strength $K=1$ for simplicity. Without loss of generality, we may assume that $\rho$ is a probability density function, i.e., $\|\rho(\cdot,t)\|_{L^1} = 1$ since the total mass is conserved in time. \newline

For the system \eqref{h_CS}, the global existence of classical solutions in periodic domain and moving boundary problem studied in \cite{HKK, HKK2} under suitable assumptions on the initial data and the communication weight. In one dimension, a complete description for the critical threshold to the system \eqref{h_CS} leading to a sharp dichotomy condition between global-in-time existence or finite-time blow-up of strong solutions is obtained in \cite{CCTT} which extends both, the sub- and supercritical regions derived in \cite{TT1}. Other interaction forces, such as attractive/repulsive forces in position are also considered in \cite{CCTT} for the classification of the critical thresholds to the system \eqref{h_CS}.

Inspired by \cite{CFRT}(see also Section \ref{sec_gsw}), we show the large time behavior of solutions in $L^\infty$-framework. For this, we first set spatial diameter $R^x$ and velocity diameter $R^u$ as follows.
\[
R^x(t):= \sup_{x, y \,\in \,\mbox{\small supp }\rho(\cdot,\,t)}|x- y| \quad \mbox{and} \quad R^u(t):= \sup_{x, y\, \in\, \mbox{\small supp } \rho(\cdot,\,t)}|u(x,t) - u(y,t)|.
\]
Using the above notations, we define the notion of flocking behavior for the system \eqref{h_CS}.
\begin{definition}Let $(\rho, u)$ be the solution to \eqref{h_CS}. Then the system \eqref{h_CS} exhibits global flocking if and only if the following two conditions hold.
\begin{itemize}
\item[(i)] The spatial diameter $R^x$ is uniformly bounded in time, i.e., there exists a positive constant $C$ which is independent of $t$ such that
\[
\sup_{t \geq 0} \,R^x(t) \leq C.
\]
\item[(ii)] The velocity diameter $R^u$ decays to zero as time goes to infinity:
\[
\lim_{t \to \infty} R^u(t) = 0.
\]
\end{itemize}
\end{definition}
\begin{theorem} Let $(\rho,u)$ be any smooth solutions to the system \eqref{h_CS} with compactly supported initial data $(\rho_0,u_0)$. Suppose that the initial spatial and velocity diameters satisfy
\[
R^u_0 < \int_{R^x_0}^\infty \psi(s)\,ds.
\]
Then the system \eqref{h_CS} exhibits the flocking behavior.
\end{theorem}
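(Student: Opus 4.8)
The plan is to reduce the hydrodynamic flocking theorem to the same system of dissipative differential inequalities (SDDI) that governed the particle model in Section~\ref{sec_gsw}, and then apply the Lyapunov functional argument verbatim. The key observation is that the diameters $R^x(t)$ and $R^u(t)$ play exactly the roles of $|x|_\infty$ and $|v|_\infty$ in the particle setting, so once I establish the differential inequalities
\[
\left|\frac{d}{dt}R^x\right| \leq R^u, \qquad \frac{d}{dt}R^u \leq -\psi(R^x)\,R^u \quad \text{a.e.},
\]
the conclusion follows by the Lyapunov functionals $\mathcal{L}_\pm(t) := R^u(t) \pm \int_0^{R^x(t)}\psi(s)\,ds$, whose monotonicity under the SDDI together with the threshold condition $R^u_0 < \int_{R^x_0}^\infty \psi(s)\,ds$ yields a uniform bound $R^x(t) \leq R^x_M < \infty$, hence exponential decay of $R^u$ from the second inequality.

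First I would track the evolution of the support. Since the solution is smooth and compactly supported, the support is transported by the velocity field $u$: writing the characteristic flow $\dot X = u(X,t)$, the support of $\rho(\cdot,t)$ is the image of the support of $\rho_0$. The spatial-diameter inequality $|\frac{d}{dt}R^x| \leq R^u$ then follows because any two boundary points of the support move with velocities whose difference is controlled by $R^u$; I would pick, at a.e.\ $t$, the pair $(x,y)$ realizing the supremum defining $R^x$ and differentiate $|X(t)-Y(t)|$ along characteristics, using $|\dot X - \dot Y| = |u(X,t) - u(Y,t)| \leq R^u(t)$.

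The velocity-diameter inequality is the crux and where I expect the main work. Along a characteristic, the bulk velocity satisfies
\[
\frac{d}{dt}u(X(t),t) = \int_\om \psi(|X-y|)\,(u(y,t) - u(X,t))\,\rho(y,t)\,dy,
\]
which is the continuum analogue of the particle alignment force. To bound $\frac{d}{dt}R^u$, I would select at a.e.\ $t$ the two support points $x_*, y_*$ achieving the maximal and minimal velocity projection along the direction realizing $R^u$, and estimate the difference of their acceleration integrals. Because $\rho$ is a probability measure supported in a set of diameter $R^x$ and $\psi$ is nonincreasing, each integrand is weighted by $\psi(|x - y|) \geq \psi(R^x)$ for $y$ in the support; pairing this with the fact that $u(y,t) - u(x_*,t)$ points ``inward'' relative to the extremal projection gives $\frac{d}{dt}R^u \leq -\psi(R^x)\,R^u$. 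The technical obstacle is the rigorous differentiation of the suprema $R^x$ and $R^u$, since the maximizing pair of points may change with time; this is handled by a standard Rademacher/Danskin argument showing these suprema are Lipschitz and their a.e.\ derivatives are bounded by the derivative along the instantaneously extremal characteristics, exactly as in the derivation of \eqref{SDDI}. Once the SDDI is in hand, the remainder is identical to the proof of Theorem~\ref{T1}.
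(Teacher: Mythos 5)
Your proposal is correct and follows essentially the same route as the paper: the paper likewise differentiates $R^x$ and $R^u$ along characteristics at an extremal pair, uses the sign condition $\bigl(u(X)-u(Y)\bigr)\cdot\bigl(u(z)-u(X)\bigr)\leq 0$ (your ``inward-pointing'' observation) together with $\psi(|X-z|)\geq\psi(R^x)$ and $\int\rho\,dz=1$ to obtain the system of dissipative differential inequalities \eqref{eq_claim}, and then concludes by invoking the Lyapunov-functional argument of Theorem \ref{T1}. Your Danskin-type remark on a.e.\ differentiability of the suprema matches the paper's appeal to smoothness of the solution, so no substantive step is missing.
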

\begin{proof}Let us consider the following two characteristic flows:
\[
\frac{d X(t)}{dt} = u(X(t),t) \quad \mbox{and} \quad \frac{d Y(t)}{dt} = u(Y(t),t),
\]
with the initial conditions $X(0) = x$ and $Y(0) = y$ where $x, y \in \mbox{supp } \rho_0$. For notational simplicity, in the rest of estimates, we omit the time dependence of $X$ and $u$, i.e., $X := X(t)$ and $u(X) := u(X(t),t)$, similarly, it is also taken for the $Y$ and $u(Y)$. Note that
\[
\frac{d u(X)}{dt} = (\pa_t + u \cdot \nabla_x )u = \int_{\R^d} \psi(|X - y|)(u(y) - u(X))\rho(y)\,dy \quad \mbox{on supp } \rho(t).
\]

For the proof, it is enough to show that the spatial and velocity diameters satisfy the following differential inequalities:
\begin{align}\label{eq_claim}
\begin{aligned}
\frac{d}{dt}R^x(t) &\leq R^u(t),\cr
\frac{d}{dt}R^u(t) &\leq -\psi(R^x(t))R^u(t),
\end{aligned}
\end{align}
due to Theorem \ref{T1}.
First, it easily follows from the definition of the $R^x$ and $R^v$ that
\[
\frac12\frac{d}{dt}|X - Y|^2 = (X - Y) \cdot (u(X) - u(Y)) \leq R^x R^u,
\]
and this yields
\[
\frac{d}{dt} R^x(t) \leq R^u(t).
\]
Since we are dealing with the classical solutions, we can choose $X$ and $Y$ such that $R^u = |u(X) - u(Y)|$ and $R^u$ is differentiable with respect to time almost everywhere.
For the estimate of time-evolution of $R^u$, we obtain
$$\begin{aligned}
\frac12\frac{d}{dt}(R^u)^2 &= \frac12\frac{d}{dt}|u(X) - u(Y)|^2 = (u(X) - u(Y)) \cdot \lt(F(\rho)(X) - F(\rho)(Y)\rt)\cr
&=: J_1 + J_2,
\end{aligned}$$
where
\[
F(\rho)(X) := \int_{\R^d} \psi(|X - y|)(u(y) - u(X))\rho(y)\,dy. 
\]
For the estimate of $J_1$, we use the fact
\[
\lt( u(X) - u(Y)\rt) \cdot \lt( u(z) - u(X) \rt) = \lt( u(X) - u(Y)\rt) \cdot \lt( u(z) - u(Y) + u(Y) - u(X) \rt) \leq 0,
\]
for $X, Y, z \in \mbox{supp } \rho(t)$, due to the choice of $X$ and $Y$. This yields
$$\begin{aligned}
J_1 &= \int_{\R^d} \psi(|X - z|)\lt( u(X) - u(Y)\rt) \cdot \lt( u(z) - u(X) \rt)\rho(z)\,dz\cr
&\leq \psi(R^x)\int_{\R^d}\lt( u(X) - u(Y)\rt) \cdot \lt( u(z) - u(X) \rt)\rho(z)\,dz
\end{aligned}$$
Similarly, we can find
\[
J_2 \leq -\psi(R^x)\int_{\R^d}\lt( u(X) - u(Y)\rt) \cdot \lt( u(z) - u(Y) \rt)\rho(z)\,dz.
\]
Hence we have
\[
\frac12\frac{d}{dt}(R^u)^2 \leq -\psi(R^x)|u(X) - u(Y)|^2 = -\psi(R^x)(R^u)^2,
\]
where we used
\[
\int_{\R^d} \rho\,dx = 1.
\]
This completes the proof.
\end{proof}

\subsection{Hydrodynamic model for the interaction of Cucker-Smale flocking particles and fluids}

By using a similar derivation presented in Section \ref{sec_h_CS}, we can also derive the two-phase fluid model consisting of the pressureless Euler equations and the isentropic Navier-Stokes equations where the coupling is through the drag force from the coupled kinetic-fluid system \eqref{k_CSNS}. More precisely, this hydrodynamic system is governed by
\begin{align}\label{h_CSNS}
\begin{aligned}
&\pa_t \rho + \nabla_x \cdot (\rho u) = 0, \quad x \in \T^3, \quad t > 0,\cr
&\pa_t (\rho u) + \nabla_x \cdot (\rho u \otimes u) = - \rho(u-v) - \rho\int_{\T^3} \psi(|x-y|)(u(x)-u(y)) \rho(y)\,dy,\cr
&\pa_t n + \nabla_x \cdot (nv) = 0,\cr
&\pa_t (n v) + \nabla_x \cdot (n v \otimes v) + \nabla_x p(n) + Lv = \rho(u-v).
\end{aligned}
\end{align}
Here $\rho(x,t)$ and $n(x, t)$ represent the particle density and the fluid density at a domain $(x, t) \in \T^3 \times \R_+$, and $u(x, t)$ and $v(x, t)$ represent the corresponding bulk velocities for $\rho (x, t)$ and $n(x, t)$, respectively.

For the global-in-time existence of classical solutions to the system \eqref{h_CSNS}, one of main difficulties in analyzing it arises from the formation of singularities. We notice that the system $\eqref{h_CSNS}$ without the drag and nonlocal velocity-alignment forces reduces to the pressureless Euler equations, and it is well-known that the Euler equations many develop a singularity in finite time no matter how smooth the initial data are. For this reason, it is natural to extend the notion of solutions to the measure-valued solutions. Concerning this issue, an interesting question is whether the interactions with viscous fluids through the drag force can prevent the formation of the finite-time singularities, and whether the system can admit the global classical solutions.

In \cite{HKK}, the global existence of classical solutions for the pressureless Euler/incompressible Navier-Stokes equations with the nonlocal alignment forces and its large-time behavior are studied. It is interesting that the ({\it a priori}) estimate of time behavior of solutions plays an important role in constructing the global-in-time solutions. For the system \eqref{h_CSNS} without the alignment force, i.e., $\psi \equiv 0$, the global existence and uniqueness of classical solutions and {\it a priori} estimate of large-time behavior of solutions showing that the two fluid velocities are aligned exponentially fast  are obtained in \cite{CK3}. The strategy used in \cite{CK3} can be directly applied to the system \eqref{h_CSNS}, and in particular, we can deduce from \cite{CK3} the following {\it a priori} estimate for the large-time behavior of solutions to the system \eqref{h_CSNS}.

\begin{theorem}Let $(\rho,u,n,v)$ be the classical solutions to the system \eqref{h_CSNS} satisfying
\begin{align}\label{condi_h_CSNS}
\begin{aligned}
&(i)\,\,\,\,\,\, \rho, \,n, \,v \in L^\infty(\T^3 \times \R_+).\cr
&(ii) \,\,\,\, \rho_c(0), \,n_c(0) \in (0,\infty)\quad  \mbox{and} \quad \widetilde{E_0} > 0 \mbox{ is small enough},
\end{aligned}
\end{align}
where $\widetilde{E_0}$ is an initial total energy given by
\[
\widetilde{E_0}:= \int_{\T^3} \rho_0|u_0|^2\,dx + \int_{\T^3} n_0 |v_0|^2 dx + \frac{2}{\gamma - 1}\int_{\T^3} n_0^\gamma \,dx.
\]

Then we have
\[
\widetilde{\mathcal{L}}(t) \leq C\widetilde{\mathcal{L}}_0e^{-\lambda t}, \quad t \in [0,T],
\]
for some constants $C$ and $\lambda > 0$, where
\[
\widetilde{\mathcal{L}}(t)= \int_{\T^3} \rho |u-m_c|^2 dx + \int_{\T^3} n| v - j_c|^2 dx + |m_c - j_c|^2 + \int_{\T^3} (n-n_c)^2 dx,
\]
and
\[
m_c(t) := \frac{\int_{\T^3} \rho u\,dx}{\int_{\T^3} \rho\,dx}.
\]
\end{theorem}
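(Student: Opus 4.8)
The plan is to run an energy/Lyapunov argument directly on $\widetilde{\mathcal L}$, following the strategy of \cite{CK3} for the case $\psi\equiv 0$ and observing that the Cucker--Smale alignment force only contributes an extra nonpositive dissipation, so it can only help. The first step is to record the conservation laws hidden in \eqref{h_CSNS}. Integrating the two continuity equations over $\T^3$ shows that $\rho_c:=\int_{\T^3}\rho\,dx$ and $n_c:=\int_{\T^3}n\,dx$ are constant in time, so in particular they stay positive by assumption (ii) of \eqref{condi_h_CSNS}. Integrating the two momentum equations, the convective, pressure and Lam\'e terms vanish by periodicity, the alignment term vanishes by the symmetry $x\leftrightarrow y$, and the two drag terms cancel against each other; hence the total momentum $\rho_c m_c+n_c j_c$ is conserved, which also pins down the common asymptotic velocity. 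Along the way one extracts the two scalar identities
\[
\rho_c\,\dot m_c=-\int_{\T^3}\rho(u-v)\,dx,\qquad n_c\,\dot j_c=\int_{\T^3}\rho(u-v)\,dx,
\]
which are the engine behind the decay of $|m_c-j_c|^2$.

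Next I would differentiate each of the four pieces of $\widetilde{\mathcal L}$ in time. Writing $\int\rho|u-m_c|^2=\int\rho|u|^2-\rho_c|m_c|^2$ (and similarly for the fluid phase) keeps the bookkeeping clean. Testing the particle momentum equation against $u$ and symmetrizing the alignment double integral produces the good term $-\int_{\T^6}\psi(|x-y|)|u(x)-u(y)|^2\rho(x)\rho(y)$; testing the fluid momentum equation against $v$ produces the viscous dissipation $-\mu\int|\nabla v|^2-(\mu+\lambda)\int(\nabla\cdot v)^2$ together with the pressure work $\int p(n)\,\nabla\cdot v$. When the mean-velocity contributions $\dot m_c,\dot j_c$ are subtracted off, the two drag cross-terms combine into the genuine dissipation $-\int\rho|u-v|^2$, and a crucial cancellation occurs because $\int\rho(u-m_c)\,dx=0$ by the definition of $m_c$. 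The outcome is a differential inequality of the schematic form
\[
\frac{d}{dt}\widetilde{\mathcal L}\leq -c_0\lt(\int\rho|u-v|^2+\int|\nabla v|^2+\int_{\T^6}\psi|u(x)-u(y)|^2\rho\rho+\mathcal D_n\rt)+(\text{cross terms}),
\]
where $\mathcal D_n$ is the density-fluctuation dissipation still to be manufactured.

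To close the loop I would convert this dissipation into control of $\widetilde{\mathcal L}$ itself. The drag term directly dominates $\int\rho|u-m_c|^2$ (after writing $u-v$ in fluctuations and using $\int\rho(u-m_c)=0$) and, combined with the scalar identities above, gives the exponential decay of $|m_c-j_c|^2$. The viscous term controls the fluid velocity fluctuation through a Poincar\'e inequality, $\int|v-\bar v|^2\leq C_P\int|\nabla v|^2$, after estimating the gap between the plain mean $\bar v$ and the $n$-weighted mean $j_c$ by $\|n-n_c\|_{L^2}$ and the $L^\infty$ bounds in (i). The genuinely hard part is the density fluctuation $\int(n-n_c)^2$: the pressureless particle phase offers no regularization, so $\mathcal D_n$ must be squeezed out of the compressible Navier--Stokes block alone. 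I would do this by the standard effective-viscous-flux/Bogovskii device---solve $\nabla\cdot\Phi=n-n_c$ with $\int\Phi=0$, test the fluid momentum equation against $\Phi$, and use $\bigl(p(n)-p(n_c)\bigr)(n-n_c)\gtrsim(n-n_c)^2$ near the constant state---which produces $\int(n-n_c)^2$ at the cost of terms that are either already-controlled dissipation or quadratically small.

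This last point is exactly where the smallness of $\widetilde E_0$ enters: it keeps the solution near the homogeneous steady state (constant fluid density $n_c$ and common velocity) so that all the cross terms and the nonlinear convective remainders are absorbed into a small multiple of the dissipation. Assembling these estimates yields $\frac{d}{dt}\widetilde{\mathcal L}\leq-\lambda\widetilde{\mathcal L}$ for some $\lambda>0$, and Gr\"onwall's inequality gives the claimed bound on $[0,T]$. The main obstacle, as indicated, is the density-fluctuation estimate for the compressible phase; the alignment force, by contrast, requires no new work since it is a sign-definite helper that merely augments the dissipation of $\int\rho|u-m_c|^2$.
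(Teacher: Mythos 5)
Your proposal takes essentially the same route as the paper: the chapter gives no in-text proof of this theorem but states explicitly that the estimate follows by directly applying the strategy of \cite{CK3} for the drag-coupled pressureless Euler/isentropic Navier--Stokes system with $\psi\equiv 0$, the Cucker--Smale alignment term entering only as an additional sign-definite dissipation after symmetrization --- exactly your plan. Your sketch (conservation of the masses and of the total momentum $\rho_c m_c + n_c j_c$, the drag dissipation $-\int_{\T^3}\rho|u-v|^2\,dx$, viscous dissipation with Poincar\'e, Bogovskii-type recovery of $\int_{\T^3}(n-n_c)^2\,dx$ from the pressure, smallness of $\widetilde{E_0}$ to absorb cross terms, and Gr\"onwall) is a faithful outline of that cited argument.
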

It is worth noticing that we do not require that the $L^\infty(\T^3)$-norms of solutions $\rho$, $n$, and $v$ should be small, we need only the small initial total energy.

As mentioned in Remark \ref{rem_max}, we can derive the isothermal Euler equations coupled with Navier-Stokes equations from the kientic-fluid system \eqref{k_CSNS} by considering the strong noise and the local alignment instead of the velocity-alignment force $F_a(f)$. We notice that for the hydrodynamic limit to be rigorously derived (and not only formally) within the framework of relative entropy techniques, one of the main challenges is to establish the global existence of strong solutions of the fluid equation. To be more precise, the standard argument for the hydrodynamic limit is based on the weak-strong stability employing a relative entropy functional and holds as long as there exist global weak solutions to the kinetic-fluid equations and strong solutions to the fluid-fluid equations. However, as we briefly mentioned as before, solutions of Euler-type equations are well-known to possibly develop a singularity in a finite-time no matter how smooth the initial data are.

For the isothermal Euler/incompressible Navier-Stokes equations, the global existence and uniqueness of classical solutions are studied in \cite{Choi3} by reinterpreting the drag forcing term as the relative damping and extracting the smoothing effect of viscosity in the Navier-Stokes equations. This yields that its rigorous derivation from Vlasov-Fokker-Planck/ incompressible Navier-Stokes equations with local alignment forces for some particular regime of the dispersed phase obtained in \cite{CCK} holds for all time. For the interactions with compressible fluids, i.e., isothermal Euler/compressible Navier-Stokes equations, the global-in-time existence of classical solutions and its large-time behavior are obtained in \cite{Choi4}.

%
%
%
%
\section*{Acknowledgement}
The work of S.-Y. Ha is supported by the Samsung Science and Technology Foundation under Project Number SSTF-BA1401-03. The work of Y.-P. Choi is supported by Engineering and Physical Sciences Research Council(EP/K008404/1) and ERC-Starting Grant HDSPCONTR ``High-Dimensional Sparse Optimal Control". The work of  Z. Li was supported by the National Natural Science Foundation of China grant 11401135.

%
%
%

\end{document}